\theoremstyle{plain}
\newtheorem{thm}{Theorem}[section]
\newtheorem{theorem}[thm]{Theorem}
\newtheorem{lemma}[thm]{Lemma}
\newtheorem{proposition}[thm]{Proposition}
\theoremstyle{definition}
\newtheorem{remark}[thm]{Remark}
\newtheorem{definition}[thm]{Definition}
\newtheorem{conjecture}[thm]{Conjecture}
\numberwithin{equation}{section}
\newcommand{\sC}{{\mathcal C}}
\newcommand{\sD}{{\mathcal D}}
\newcommand{\sH}{{\mathcal H}}
\newcommand{\sK}{{\mathcal K}}
\newcommand{\sL}{{\mathcal L}}
\newcommand{\sP}{{\mathcal P}}
\newcommand{\C}{{\mathbb C}}
\newcommand{\BP}{{\mathbb P}}
\newcommand{\Z}{{\mathbb Z}}
\newcommand{\End}{{\rm End}}
\newcommand{\fg}{{\mathfrak g}}
\newcommand{\fgl}{{\mathfrak g}{\mathfrak l}}
\newcommand{\fcsp}{{\mathfrak c}{\mathfrak s}{\mathfrak p}}
\newcommand\tr{{\rm tr}}
\newcommand\Fr{\mathop{\rm Frame}\nolimits}
\def\Sym{\mathop{\rm Sym}\nolimits}
\def\Hom{\mathop{\rm Hom}\nolimits}
\title[Legendrian cone structures and contact prolongations]{Legendrian cone structures and contact prolongations}
\author[Jun-Muk Hwang]{Jun-Muk Hwang} 
\address{Institute for Basic Science, Center for Complex Geometry,
55 Expo-ro, Yuseong-gu, Daejeon, 34126, Korea}
 \email{jmhwang@ibs.re.kr}
\begin{document}

\begin{abstract}
We study a cone structure $\sC \subset \BP D$ on a holomorphic contact manifold $(M, D \subset T_M)$ such that each fiber $\sC_x \subset \BP D_x$ is isomorphic to a Legendrian submanifold of fixed isomorphism type.
By characterizing subadjoint varieties among Legendrian submanifold in terms of contact prolongations,   we prove that the canonical distribution on the associated contact G-structure admits a holomorphic horizontal splitting.
\end{abstract}

\maketitle

\noindent {\sc MSC 2010.} 53B99, 14J45

\noindent {\sc Keywords.}  contact structure,  prolongation, Legendrian submanifold, G-structure

%

\section{Introduction}
We will work in the complex analytic setting: all geometric objects refer to holomorphic ones.

A contact structure on  a complex manifold $M$ of dimension $2n+1$ is a
subbundle $D \subset T_M$ of rank $2n$ such that the Lie bracket of vector fields induces a nondegenerate bilinear form
$$ \omega_x : \wedge^2 D_x \to L_x$$ for each $x \in M$,  where
 $L$ is the quotient line bundle  $ T_M/D$. The following conjecture is well-known.

 \begin{conjecture}\label{c.LS} Let $X$ be a Fano manifold with a contact structure $D \subset T_X$. Assume that ${\rm Pic} X \cong \Z \cdot L$ for the line bundle $L = T_X /D$. Then $X$ is homogeneous. \end{conjecture}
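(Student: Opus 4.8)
The final statement is the LeBrun--Salamon conjecture, so what follows is not a complete proof but the line of attack that the machinery developed here is designed to feed into, namely the approach through the variety of minimal rational tangents (VMRT). The starting point is Kebekus's structure theory for Fano contact manifolds: under the hypotheses of the conjecture $X$ is covered by a family of minimal rational curves --- the \emph{contact lines} --- that are everywhere tangent to $D$, and at a general point $x$ their tangent directions form a smooth irreducible Legendrian submanifold $\sC_x \subset \BP D_x$, Legendrian for the contact structure on $\BP D_x$ induced by $\omega_x$, of dimension $n-1$ inside $\BP D_x \cong \BP^{2n-1}$. The assumption $\Pic X \cong \Z\cdot L$ should moreover force $\sC_x$ to be linearly nondegenerate.

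The second step is to verify that the cone structure $\sC \subset \BP D$ obtained this way is precisely of the kind treated in this paper: over a Zariski-open subset of $X$ the fibers $\sC_x$ are mutually isomorphic Legendrian submanifolds of a single fixed isomorphism type. This \emph{isotriviality of the VMRT} is itself a substantive issue --- one expects it from the rigidity of projective families of Legendrian submanifolds over a unirational base together with the irreducibility of the covering family --- but making it rigorous, in particular controlling the behaviour over the degeneracy locus and matching the precise hypothesis on $\sC$, is delicate.

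With $\sC \subset \BP D$ recognized as a Legendrian cone structure of fixed isomorphism type, one applies the main theorem: the canonical distribution on the associated contact G-structure admits a holomorphic horizontal splitting. The contact Fano geometry should then supply enough additional rigidity --- infinitesimal symmetries, or nonvanishing of the relevant contact prolongation --- so that, via the characterization of subadjoint varieties among Legendrian submanifolds in terms of contact prolongations, each fiber $\sC_x$ is identified with the subadjoint variety of a complex simple Lie algebra $\fg$, i.e.\ with the VMRT of the adjoint variety $X^{\mathrm{ad}}_{\fg}$. Finally, the Cartan--Fubini type extension theorem for VMRT-structures (Hwang--Mok, with the refinements available in the Legendrian case) identifies $X$ itself with $X^{\mathrm{ad}}_{\fg}$, which is homogeneous.

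The principal obstacle is the third step: extracting, from the holomorphic horizontal splitting and the prolongation data, the rigidity needed to pin $\sC_x$ down to an actual subadjoint variety rather than merely a Legendrian submanifold with a ``flat-looking'' associated structure --- this is the exact content that the prolongation characterization has to carry. Running a close second is the isotriviality input of the second step, which has to be established without assuming homogeneity from the outset. The first and last steps --- the covering by contact lines and the Cartan--Fubini recognition --- are by now standard.
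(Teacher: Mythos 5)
This statement is an open conjecture in the paper (Conjecture~\ref{c.LS}); the paper offers no proof of it, only partial results (Theorem~\ref{t.csplit} and Theorem~\ref{t.1}) intended to feed into the approach you describe. Your outline is an accurate account of that approach and of its genuine gaps: step one is Kebekus's Theorem~\ref{t.Ke}, and the final recognition step is already supplied by Mok's Theorem~\ref{t.Mok} (if $\sC_x$ is subadjoint then $X$ is homogeneous), so no separate Cartan--Fubini argument is needed there. The two obstacles you single out --- establishing isotriviality of the Legendrian cone structure and, above all, showing that $\sC_x$ is actually a subadjoint variety rather than a general nondegenerate Legendrian submanifold --- are exactly what the paper identifies as the missing ingredients (``no good strategy has been suggested so far to prove $\sC_x$ is a subadjoint variety''). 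Two small corrections: Kebekus's theorem gives that each \emph{irreducible component} of $\sC_x$ is a Legendrian submanifold, not that $\sC_x$ is irreducible; and linear nondegeneracy does not need the Picard hypothesis --- any Legendrian submanifold that is not a linear subspace is automatically nondegenerate (Proposition~\ref{p.III}(1), after Landsberg--Manivel). So: you have correctly recognized the statement as unproved and reproduced the paper's intended strategy, but neither you nor the paper proves it.
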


  Conjecture \ref{c.LS} has drawn much attention because it implies the LeBrun-Salamon conjecture on quaternion-K\"ahler manifolds in Riemannian holonomy theory, for which we may assume additionally that $X$ is K\"ahler-Einstein (see \cite{Le}).

 One approach to Conjecture \ref{c.LS} is via contact lines and their varieties of minimal rational tangents. To explain this approach, we recall the following definitions.

\begin{definition}\label{d.Legendre}
Let $(V, \sigma)$ be a symplectic vector space, i.e., a vector space $V$ equipped with a symplectic form $\sigma: \wedge^2 V \to \C$.
\begin{itemize}
\item[(1)] The {\em conformal symplectic group} ${\rm CSp}(V) \subset {\rm GL}(V)$ is the direct sum of the symplectic group ${\rm Sp}(V)$ and the center $\C^{\times} \cdot {\rm Id}_V$.
    \item[(2)] A projective subvariety $Z \subset \BP V$ is {\em Legendrian} if its affine cone $\widehat{Z} \subset V$ satisfies
$ \dim \widehat{Z} = \frac{1}{2} \ \dim V $ and is isotropic with respect to $\sigma$, i.e.,  $$\omega(T_{\widehat{Z}, v}, T_{\widehat{Z}, v}) =0 $$ for each nonsingular point
$v \in \widehat{Z}$.  Here, we regard the tangent space $T_{\widehat{Z},z}$ as a subspace of $V$.
We say that $Z$ is a Legendrian submanifold if furthermore it is nonsingular.
 \item[(3)]  A Legendrian submanifold $Z \subset \BP V$ is {\em nondegenerate} if the affine cone $\widehat{Z} \subset V$ spans the vector space $V$.
\item[(4)] A nondegenerate Legendrian submanifold $Z \subset \BP V$ is a {\em subadjoint variety  } if the group $G^Z \subset {\rm CSp}(V)$ of elements of ${\rm CSp}(V)$ preserving $\widehat{Z} \subset V$
 acts transitively on $Z$. \end{itemize}
\end{definition}

The above definition of subadjoint varieties agrees with the standard one by  Theorem 11 of \cite{LM}.
Many examples of Legendrian submanifolds in $\BP V$ other than subadjoint varieties are given  in  \cite{Bu07}, \cite{Bu08} and \cite{LM}.

In the setting of Conjecture \ref{c.LS}, a rational curve $C \subset X$ is called a contact line if $C \cdot L =1$ and $C$ is tangent to $D \subset T_X$. It is known that contact lines exist through every point of $X$.  Kebekus proved the following result in Theorem 4.1 and Theorem 4.4 of \cite{Ke2}.

\begin{theorem}\label{t.Ke}
 For $X, D$ and $L$ as in Conjecture \ref{c.LS}, fix an irreducible component $\sK$ of the space of contact lines such that members of $\sK$ cover $X$. Let   $x \in X$ be a general point and $\sK_x \subset \sK$ be the subscheme parametrizing members of $\sK$ passing through $x$. Then \begin{itemize}
 \item[(a)] all members of $\sK_x$ are nonsingular;
 \item[(b)] the algebraic subset $\sC_x \subset \BP D_x$ consisting of tangent spaces to members of $\sK_x$ is nonsingular;
     and
       \item[(c)] each irreducible component of $\sC_x$ is a Legendrian submanifold for the symplectic form $\omega_x$ on $D_x$. \end{itemize} \end{theorem}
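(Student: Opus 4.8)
The plan is to reduce everything to the study of one general member $C$ of $\sK_x$ through a general point $x$, normalized by $f\colon\BP^1\to X$ with $f(0)=x$, and to read off (a)--(c) from the splitting types of $f^*T_X$ and $f^*D$ on $\BP^1$. Since $\Pic X\cong\Z\cdot L$ with $L$ ample, a contact line has minimal $L$-degree, hence is irreducible and does not degenerate (bend-and-break), and a general member through a general point is free by Mori's theorem. The nondegeneracy of $\omega$ on $\wedge^2 D$ gives $\det D\cong L^{n}$, so $-K_X\cong(n+1)L$ and $-K_X\cdot C=n+1$; thus the free minimal rational curve $C$ has the standard splitting $f^*T_X\cong\sO(2)\oplus\sO(1)^{\oplus(n-1)}\oplus\sO^{\oplus(n+1)}$ and $N_{C/X}:=f^*T_X/T_{\BP^1}\cong\sO(1)^{\oplus(n-1)}\oplus\sO^{\oplus(n+1)}$. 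Combining the sequence $0\to f^*D\to f^*T_X\to f^*L\cong\sO(1)\to 0$, the inclusion $T_{\BP^1}=\sO(2)\subset f^*D$, and the self-duality $f^*D\cong(f^*D)^{*}\otimes\sO(1)$ induced by $\omega$, a short computation forces $f^*D$ to be the unique corank-one subbundle realizing the sequence as the non-split extension, namely $f^*D\cong\sO(2)\oplus\sO(1)^{\oplus(n-1)}\oplus\sO^{\oplus(n-1)}\oplus\sO(-1)$. Writing $N^{\perp}:=T_{\BP^1}^{\perp_{\omega}}\subset f^*D$ for the $\omega$-orthogonal of $T_{\BP^1}$, one gets $N^{\perp}\cong\sO(2)\oplus\sO(1)^{\oplus(n-1)}\oplus\sO^{\oplus(n-1)}$ and $N^{\perp}/T_{\BP^1}\cong\sO(1)^{\oplus(n-1)}\oplus\sO^{\oplus(n-1)}$, the latter carrying the induced nondegenerate $\omega$-pairing with values in $\sO(1)$.

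For parts (a) and (b) I would invoke the general theory of minimal rational curves, essentially as in Kebekus's paper \cite{Ke2}, without reproving it. Freeness of a general member through a general point together with the standard splitting gives: $f$ is an immersion (the $\sO(2)$ summand has multiplicity one); $\sK_x$ is smooth at $[C]$ of dimension $h^0(N_{C/X}(-x))=n-1$, since $H^1(N_{C/X}(-x))=H^1(\sO^{\oplus(n-1)}\oplus\sO(-1)^{\oplus(n+1)})=0$; and the tangent map $\tau_x\colon\sK_x\to\BP D_x$, $[C']\mapsto[T_xC']$, is generically finite onto its image $\sC_x$. Bend-and-break rules out singular or non-embedded members of $\sK_x$ over a general $x$ (such a member, after deforming with a general point fixed, would break off a rational curve of strictly smaller $L$-degree), so every member of $\sK_x$ is a smooth rational curve and $\sC_x$ is a nonsingular variety of pure dimension $n-1$; in particular $\dim\widehat{\sC_x}=n=\tfrac12\dim D_x$, which is the dimension half of the Legendrian condition.

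The heart of the matter is the isotropy in (c). Fix a general $C\in\sK_x$ with tangent direction $[v]\in\sC_x$, where $v$ spans $(T_{\BP^1})_0\subset(f^*D)_0=D_x$. A direct cohomology computation with the above splitting shows that every tangent vector $\xi\in T_{[C]}\sK_x=H^0(N_{C/X}(-x))$ lifts to a section $\tilde\xi\in H^0(N^{\perp}(-x))$, i.e.\ a section of $f^*D$ vanishing at $x$ with values in $T_{\BP^1}^{\perp_{\omega}}$; this uses only $H^0\big((N_{C/X}/(N^{\perp}/T_{\BP^1}))(-x)\big)=0$ and $H^1(T_{\BP^1}(-x))=H^1(\sO(1))=0$, both immediate from the splitting. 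The leading coefficient $\bar\xi:=\tfrac{d}{dz}\tilde\xi|_{z=0}$, a priori in $(N^{\perp})_0=v^{\perp_{\omega}}\subset D_x$, is well-defined modulo $\langle v\rangle$ and represents $d\tau_x(\xi)\in T_{[v]}\sC_x\subset D_x/\langle v\rangle$; since $\tau_x$ is a generically finite morphism between varieties of the same dimension $n-1$, $d\tau_x$ is an isomorphism at the general $[C]$, so the vectors $\bar\xi$ together with $v$ span the $n$-dimensional space $T_{\widehat{\sC_x},\hat v}$. Now for two tangent vectors $\xi,\eta$ with such lifts, $f^*\omega(\tilde\xi,\tilde\eta)$ is a section of $f^*L\cong\sO(1)$ vanishing to order at least $2$ at $0$, hence identically zero; comparing the coefficients of $z^2$ at $0$ gives $\omega_x(\bar\xi,\bar\eta)=0$. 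Together with $\omega_x(v,v)=0$ and $\omega_x(v,\bar\xi)=0$ (because $\bar\xi\in v^{\perp_{\omega}}$), this shows $T_{\widehat{\sC_x},\hat v}$ is an isotropic, hence Lagrangian, subspace of $(D_x,\omega_x)$. As this holds at a general point of each irreducible component of $\sC_x$ and isotropy of the tangent space is a closed condition, every irreducible component of $\sC_x$ is a Legendrian submanifold in the sense of Definition \ref{d.Legendre}.

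The main obstacle is not (c) --- which, once the splitting types are in hand, is the short degree argument above --- but the input from the theory of minimal rational curves behind (a) and (b): controlling singular and non-embedded members of $\sK$ over a general point, and establishing that $\tau_x$ is finite, which rely on bend-and-break and on Kebekus's analysis of families of singular rational curves. A secondary subtlety is the justification that deformations of $C$ inside $\sK_x$ are represented by sections lying in $N^{\perp}=T_{\BP^1}^{\perp_{\omega}}$; this falls out of the cohomology of the splitting and, along the way, reproves $\dim\sK_x=n-1$.
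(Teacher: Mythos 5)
First, a point of reference: the paper does not prove Theorem \ref{t.Ke} at all --- it is quoted verbatim from Theorems 4.1 and 4.4 of \cite{Ke2}, so there is no internal proof to compare your argument against. Judged on its own terms, your treatment of part (c) is essentially the standard (and correct) argument: the splitting $f^*T_X\cong\sO(2)\oplus\sO(1)^{\oplus(n-1)}\oplus\sO^{\oplus(n+1)}$, the self-duality $f^*D\cong (f^*D)^{\vee}\otimes\sO(1)$ pinning down the splitting type of $f^*D$, the lifting of deformations into $T_{\BP^1}^{\perp_\omega}$, and the vanishing of a section of $\sO(1)$ with a double zero. One small unjustified step: the vanishing $H^0\bigl((f^*T_X/N^{\perp})(-x)\bigr)=0$ is not automatic from rank and degree (the quotient is an extension of $\sO(1)$ by $\sO(-1)$, and $\sO(1)\oplus\sO(-1)$ would ruin it); you need to observe that this quotient is a quotient of the semipositive bundle $f^*T_X$, hence is $\sO^{\oplus 2}$. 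That is easily repaired.

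The genuine gap is in (a) and (b), and your one-line disposal of them is false as stated. ``Bend-and-break with a general point fixed'' cannot rule out singular members of $\sK_x$: the family $\sK_x$ is unsplit precisely because bend-and-break requires either two fixed points or degree at least $-K_X\cdot C\ge \dim X+1$, neither of which is available for a contact line; and a singular member of $\sK_x$ is a priori a non-free, non-generic member to which none of your splitting computations apply. Ruling out cuspidal members (and, for (b), showing that $\tau_x$ is an immersion and injective, not merely generically finite --- even with all curves smooth, $\sC_x$ could a priori be singular where $\tau_x$ fails to be an embedding) is the actual technical content of \cite{Ke2}: it proceeds through Kebekus's earlier dimension estimates for families of singular rational curves and a contact-specific analysis of ``dubbies'' (pairs of lines meeting in two points or tangentially). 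You do acknowledge at the end that (a) and (b) must be imported from \cite{Ke2}, which is consistent with how the paper itself treats the theorem; but the sketch you interpolate in their place is not a proof and should be deleted rather than presented as an argument.
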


     The subset $\sC_x \subset \BP D_x$ in Theorem \ref{t.Ke} is precisely the variety of minimal rational tangents of the Fano manifold $X$ in the sense of \cite{HM98} or \cite{H01}. By applying the theory of varieties of minimal rational tangents, Mok obtained the following result (Main Theorem in Section 2 of \cite{Mo}).

     \begin{theorem}\label{t.Mok}
     In the setting of Theorem \ref{t.Ke}, if $\sC_x$ is a subadjoint variety, then $X$ is homogeneous. \end{theorem}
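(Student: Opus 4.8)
The plan is to reconstruct $X$ from its variety of minimal rational tangents $\sC_x$, combining a recognition theorem for the adjoint variety with the Cartan--Fubini type extension principle for Fano manifolds of Picard number $1$. First I would fix the flat model. By Theorem~11 of \cite{LM}, a subadjoint variety $Z \subset \BP V$ is the variety of minimal rational tangents, at a base point $o$, of the adjoint variety $X_0 = \widetilde{G}/P$, where $\widetilde{G}$ is the simple algebraic group with isotropy $G^Z$ and $P$ is the parabolic attached to the highest root $\theta$ of $\fg := {\rm Lie}(\widetilde{G})$. The algebra $\fg$ carries the associated contact grading $\fg = \fg_{-2} \oplus \fg_{-1} \oplus \fg_0 \oplus \fg_1 \oplus \fg_2$ with $\dim \fg_{\pm 2} = 1$ and $\fg_{-1} \cong V$; on $X_0$ the contact distribution $D_0$, the contact line bundle $L_0 = T_{X_0}/D_0$, and the cone structure $\sC_0 \cong Z$ are all $\widetilde{G}$-homogeneous.

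Next, on $X$ the holomorphic family $\{\sC_x\}$, all of whose members are of isomorphism type $Z$, defines over a Zariski-dense open subset a cone structure $\sC \subset \BP D$ modeled on $Z$ --- equivalently, a reduction to $G^Z \subset {\rm CSp}(V)$ of the coframe bundle of the contact distribution $D$, i.e. the contact $G$-structure of the title --- whose flat model is $X_0$. Following the theory of geometric structures defined by varieties of minimal rational tangents (Hwang--Mok), this $G$-structure carries a canonically associated normal Cartan connection of type $(\fg, P)$, whose curvature is a structure function with values in the positive-degree part of the Chevalley--Eilenberg cohomology $H^{2}(\fg_-, \fg)$, $\fg_- = \fg_{-1} \oplus \fg_{-2}$. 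The crucial local assertion is that this curvature vanishes identically near a general point, so that $X$ is locally biholomorphic, compatibly with its cone structure, to $X_0$. This rests on two inputs: (i) a prolongation computation showing that the successive prolongations of the Lie algebra $\aut(\widehat{Z}) \subset \fgl(V)$ of infinitesimal linear automorphisms of the cone are precisely $\fg_1$, then $\fg_2$, and then zero, so that the symbol of the cone structure on $X$ coincides with that of $X_0$ --- here the subadjointness of $Z$ is essential, a general Legendrian submanifold failing this rigidity; and (ii) the vanishing of the relevant cohomology in positive degree for the contact grading, a consequence of Kostant's theorem and of the place these gradings occupy in the classification of simple graded Lie algebras (Yamaguchi). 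Granting (i) and (ii), one spreads the local normal form along the contact lines through a general point of $X$ to obtain a local identification with $X_0$.

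Finally, since ${\rm Pic}\, X \cong \Z \cdot L$ and $\sC_x$ is positive-dimensional and linearly nondegenerate (built into the definition of a subadjoint variety), the Cartan--Fubini type extension theorem for Fano manifolds of Picard number $1$ applies: a biholomorphism between connected open subsets of $X$ and $X_0$ preserving varieties of minimal rational tangents extends to a global biholomorphism $X \xrightarrow{\sim} X_0$. Since $X_0 = \widetilde{G}/P$ is homogeneous, so is $X$.

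I expect the middle step --- local flatness of the cone structure --- to be the main obstacle. An abstract isomorphism $\sC_x \cong Z$ at one general point says nothing a priori about how the family of cones varies, and the structure functions of the contact $G$-structure could in principle be nonzero; one must combine the prolongation/rigidity statement for $\aut(\widehat{Z})$ --- which genuinely uses subadjointness and would fail for a generic Legendrian submanifold --- with the cohomology vanishing that forces the Cartan curvature to vanish. Packaging this through the contact grading, and using contact lines both to propagate the local normal form and to verify its global consistency before invoking Cartan--Fubini, is the technical core of the proof.
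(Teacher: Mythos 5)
The paper does not prove this statement at all: it is quoted as Mok's theorem (Main Theorem in Section 2 of \cite{Mo}), whose proof in turn rests on Hong's work \cite{Ho} on Fano manifolds carrying geometric structures modeled on homogeneous contact manifolds. Your outline --- reduction to a parabolic contact structure of type $(\fg,P)$ for the contact grading, a Tanaka/Cartan connection, local flatness, and then a Cartan--Fubini type extension to identify $X$ with the adjoint variety --- is indeed the architecture of that proof.

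There is, however, a genuine gap in your input (ii). For contact gradings of simple Lie algebras the positive-homogeneity part of $H^2(\fg_-,\fg)$ does \emph{not} vanish: contact gradations are precisely among the exceptional cases in Yamaguchi's classification, which is why parabolic contact structures (Lagrangian contact structures, Lie contact structures, the exotic exceptional ones, etc.) are curved geometries possessing nontrivial local invariants. So Kostant's theorem cannot force the harmonic curvature of the normal Cartan connection to vanish, and local flatness is not an algebraic consequence of having the right symbol. This is exactly where the global hypotheses enter in \cite{Ho} and \cite{Mo}: the harmonic curvature is a holomorphic section of a vector bundle associated to the Tanaka connection, and one shows it vanishes by restricting to the contact lines (members of $\sK_x$), along which that bundle is negative, together with the fact that such lines pass through every point in enough directions. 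In other words, the contact lines are not merely a device for ``spreading a local normal form'' obtained by algebra, as your last step suggests; they are the mechanism that produces the flatness in the first place. Your step (i) is essentially sound (modulo phrasing it as the Tanaka prolongation of the graded algebra $\fg_{-2}\oplus\fg_{-1}\oplus\fg_0$ rather than the ordinary prolongation of $\aut(\widehat{Z})\subset\fgl(V)$ --- the distinction this paper's Section \ref{s.2} is about), and the final Cartan--Fubini step is standard for Picard number one once local flatness is in hand.
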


     This seems to be a promising approach to Conjecture \ref{c.LS}, as it does not assume the existence of  vector fields on $X$.
 However, no good strategy has been suggested so far  to prove $\sC_x$ is a subadjoint variety in Theorem \ref{t.Ke}. This gives a motivation to study the geometry of general `Legendrian cone structure' in the following sense.

\begin{definition}\label{d.cone-structure}
Let $D \subset T_M$ be a contact structure on a complex manifold $M$ and let $\omega_x : \wedge^2 D_x \to L_x$ be the associated symplectic form at each $x \in M$. A submanifold $\sC \subset \BP D$ is a {\em Legendrian cone structure} on $M$ if each fiber $\sC_x \subset \BP D_x$ is a  Legendrian submanifold in the sense of Definition \ref{d.Legendre}. A Legendrian cone structure is {\em nondegenerate} if $\sC_x$ is nondegenerate. For a  fixed Legendrian submanifold $Z \subset \BP V$  in the sense of Definition \ref{d.Legendre}, a Legendrian cone structure $\sC \subset \BP D$ on $M$ is $Z$-{\em isotrivial} if the submanifold $\sC_x \subset \BP D_x$ is equivalent to $Z \subset \BP V$  modulo ${\rm CSp}(V)$ for each $x \in M$. \end{definition}

 A theory of general Legendrian cone structures is yet to be developed. It seems reasonable to start with studying $Z$-isotrivial Legendrian cone structures for a general nondegenerate $Z \subset \BP V$.
When $Z \subset \BP V$ is a subadjoint variety, a $Z$-isotrivial Legendrian cone structure is a parabolic contact structure in the sense of Section 4.2 of \cite{CS}. In fact, the proof of Theorem \label{t.Mok} uses the result of \cite{Ho} which employed  Tanaka connections  for parabolic contact structures.
As a first step toward the geometry of $Z$-isotrial Legendrian cone structures, one may ask whether there is an analog of Tanaka connection in this general setting.   We prove  the following Theorem, which says that the canonical distribution on the associated contact G-structure admits a holomorphic horizontal splitting. This can be viewed as an analog of Tanaka connection.

\begin{theorem}\label{t.main}
Let $Z \subset \BP V$ be a nondegenerate Legendrian submanifold for a symplectic vector space $(V, \sigma)$, which is not a subadjoint variety and  let $G\subset {\rm CSp}(V)$ the automorphism group of $Z$.
Let $M$ be a complex manifold  $\dim M = \dim V +1$  equipped with a contact structure $D \subset T_M$ and a $Z$-isotrival Legendrian cone structure $\sC \subset \BP D$.  We have the associated  $G$-principal fiber subbundle $\pi:\sP \to M$ of the contact frame bundle of $M$. Let $T^{\pi} \subset T_{\sP}$ be the kernel of ${\rm d} \pi$ and let $\sD \subset T_{\sP}$ be the natural distribution satisfying $\sD/T^{\pi} \cong \pi^* D$. Then there exists a  subbundle $\sH \subset \sD$ such that $\sD \cong T^{\pi} \oplus \sH$. \end{theorem}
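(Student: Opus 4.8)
\emph{Proposed strategy.} The plan is to reduce the assertion to the construction of a single holomorphic $\fg$-valued $1$-form on $\sD$ and then to produce that form by the equivalence method, the one non-formal ingredient being the vanishing of the first contact prolongation of $\fg=\mathrm{Lie}(G)$ — which is exactly the characterization of subadjoint varieties established above and the point at which the hypothesis that $Z$ is not subadjoint enters. To that end, note that the vertical bundle $T^{\pi}$ is canonically trivialized, $T^{\pi}\cong\sP\times\fg$, by sending $A\in\fg$ to the fundamental vector field $\zeta_A$, so to give a subbundle $\sH\subset\sD$ with $\sD=T^{\pi}\oplus\sH$ is the same as to give a $\fg$-valued $1$-form $\gamma$ on $\sD$ reproducing the fundamental vector fields, i.e.\ with $\gamma(\zeta_A)=A$ for all $A\in\fg$; one then sets $\sH:=\Ker(\gamma\colon\sD\to\sP\times\fg)$, which is a subbundle of rank $\dim V$ because $\gamma|_{\sD}$ is surjective, and $\sD=T^{\pi}\oplus\sH$ follows from $\rk\sD=\dim\fg+\dim V$ together with $\sH\cap T^{\pi}=0$. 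Observe that $\gamma$ is \emph{weaker} than a principal connection — it is required only on the corank-one subbundle $\sD\subset T_{\sP}$ — so the obstruction to the theorem is not the Atiyah class of $T^{\pi}\hookrightarrow T_{\sP}$; local holomorphic such $\gamma$ always exist, and any two of them differ by $s\circ\theta$, where $\theta$ is the tautological $V$-valued $1$-form on $\sD$ (with $\Ker\theta=T^{\pi}$, coming from the identification $D_x\cong V$ carried by each frame) and $s$ is a local holomorphic section of $\sP\times(V^{*}\otimes\fg)$.

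To single out a global $\gamma$ I would run one normalization step. Using $\theta$, the tautological $\C$-valued form $\phi$ with $\Ker\phi=\sD$, the structure equation $\mathrm d\phi\equiv\sigma(\theta\wedge\theta)$ forced by the contact condition, and a local lift of $\phi$ (``Reeb lift''), one reads off from $\mathrm d\theta+\gamma\wedge\theta$ a holomorphic $G$-equivariant function $\tau(\gamma)\colon\sP\to W$ valued in a fixed $G$-module $W$ built from $\Hom(\wedge^{2}V,V)$ (a quotient, to absorb the Reeb-lift ambiguity): the ``soldered torsion'' of $\gamma$. Under $\gamma\mapsto\gamma+s\circ\theta$ one has $\tau\mapsto\tau+\partial s$ for the induced Spencer map $\partial\colon V^{*}\otimes\fg\to W$, essentially $(\partial s)(X,Y)=s(X)\!\cdot\!Y-s(Y)\!\cdot\!X$. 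The kernel of $\partial$ is the first contact prolongation of $\fg=\mathrm{Lie}(G)\subset\fcsp(V)$ relative to the Heisenberg symbol of the contact structure; since $Z$ is not a subadjoint variety this prolongation vanishes by the characterization proved above, so $\partial$ is injective. Fixing a linear complement $N$ to its image, for every local candidate the condition $\tau(\gamma+s\circ\theta)\in N$ has a unique pointwise solution $s$ depending holomorphically on the data, hence there is a unique local holomorphic $\gamma$ on $\sD$ that reproduces the fundamental vector fields and has $\tau(\gamma)\in N$; by this local uniqueness these forms agree on overlaps and glue to the required global holomorphic $\gamma$, whence $\sH:=\Ker\gamma$. (If $G$ is reductive one takes $N$ to be a $G$-submodule, making $\gamma$ and $\sH$ equivariant — the genuine analogue of the Tanaka connection; but for the bare splitting statement this is unnecessary. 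The argument also fits the general theory of canonical connections for filtered $G$-structures with vanishing first prolongation, the simplification being that, as only $\sD$ and not all of $T_{\sP}$ must be split, a single normalization suffices and no higher prolongation is inspected.)

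\emph{Main obstacle.} The substantive content is the vanishing of the first contact prolongation of $\mathrm{Lie}(G)$ for non-subadjoint $Z$, i.e.\ the characterization theorem, on which everything rests. Within the proof of Theorem \ref{t.main} itself the delicate part is the bookkeeping around the structure function: verifying that $\tau(\gamma)$ is a well-defined holomorphic $G$-equivariant function on $\sP$ in spite of the Reeb-lift ambiguity and the forced equation for $\phi$, pinning down the Spencer map $\partial$ so that $\Ker\partial$ is precisely the object controlled by the characterization, and checking that ``reproducing the fundamental vector fields together with $\tau(\gamma)\in N$'' is pointwise uniquely solvable once $\partial$ is injective, so that the local normalizations patch holomorphically.
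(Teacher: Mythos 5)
Your proposal is correct and follows essentially the same route as the paper: the paper likewise factors the theorem into a general splitting result for contact $G$-structures whose Lie algebra has no nonzero contact prolongation (proved by normalizing the torsion $\Pi_H(u,v)={\rm d}\theta^{\sL_H}(u^H,v^H)$ into a fixed complement $W$ of ${\rm Im}(\delta)$, where $\delta A(u,v)=A_u(v)-A_v(u)-\sigma(u,v)\,\vec{A}$ is exactly your Spencer map with the Heisenberg correction, injective precisely when the contact prolongation vanishes) together with the characterization of subadjoint varieties as the nondegenerate Legendrian submanifolds admitting a nonzero contact prolongation. The only bookkeeping difference is that the paper absorbs the Reeb-lift ambiguity by proving $\Pi_H$ is independent of the chosen extension of $\ell_H$ to a line subbundle $\sL_H$ complementary to $D$, rather than by passing to a quotient module.
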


Theorem \ref{t.main} is an immediate consequence of two results, Theorem \ref{t.csplit} and Theorem \ref{t.1} below. Theorem \ref{t.csplit}  is a general result on  contact G-structures and contact prolongations. In principle, Theorem \ref{t.csplit} is contained in Morimoto's general theory (\cite {Mr}) of G-structures on filtered manifolds. As it can be stated and proved more explicitly in the contact setting, we present a self-contained account here. Theorem \ref{t.1} is a characterization of
subadjoint varieties in terms of  contact prolongations, a contact analog of Theorem 7.13 in \cite{FH18}. Its proof is simpler than that of \cite{FH18}, thanks to the work \cite{LM}.
  These  theorems are expected to be useful in the approach to
 Conjecture \ref{c.LS} via varieties of minimal rational tangents of contact lines.

\section{Contact prolongation and contact G-structure}\label{s.2}

In this section, we look at the theory of contact G-structures.
We will give a self-contained presentation, modifying the ordinary G-structure theory  presented in \cite{St}. Although this is a special case of
the general theory of  geometric structures on
filtered manifolds developed by Morimoto in \cite{Mo}, the explicit presentation here could be useful
in applications.

\begin{definition}\label{d.prolong}
Let $(V, \sigma)$ be a symplectic  vector space.  \begin{itemize}
\item[(1)] The {\em conformal symplectic algebra} is the Lie algebra
$\fcsp(V) \subset \fgl(V)$ of the conformal symplectic group ${\rm CSp}(V) \subset {\rm GL}(V)$ in Definition \ref{d.Legendre}. Recall that  $ a \in \fgl(V)$ belongs to $\fcsp(V)$ if and only if $$ \sigma(a(v),w) + \sigma(v, a(w)) =
\frac{2}{\dim V}\tr (a) \cdot \sigma(v, w) $$  for all $ v,w \in V
,$ where $\tr (a)$ is the trace of $a \in \End(V)$.
\item[(2)] For a Lie subalgebra $\fg \subset \fcsp(V)$ and an element $A \in \Hom(V, \fg)$,
  we will write $A(u) \in \fg, u \in V,$ simply as $A_u \in \End(V)$.  Denote by $\vec{A} \in V$ the unique vector satisfying
  $$\sigma( \vec{A}, u) = \frac{2}{\dim V} \tr (A_u) \mbox{ for all } u \in V.$$
  \item[(3)]  For $A \in \Hom(V, \fg)$, let $\delta A $ be the element of $\Hom(\wedge^2 V, V)$ defined by
  $$\delta A (u, v) = A_u(v) - A_v(u)  - \sigma (u, v) \ \vec{A} $$  for $ u, v \in V.$
    This defines  a homomorphism $\delta: \Hom( V, \fg) \to \Hom (\wedge^2 V, V).$
  An element $A \in \Hom(V, \fg)$ is a {\em contact prolongation} of $\fg$ if $ \delta A =0$.
  \end{itemize}
\end{definition}

\begin{definition}\label{d.frame}
Let $M$ be a complex manifold with a contact structure $D \subset M$ and the contact line bundle $L= T_M/D$.
Denote by $\omega: \wedge^2 D \to L$ the homomorphism induced by the Lie bracket of local vector fields, equipping $D_x$ with  the  $L$-valued symplectic form  $\omega_x:  \wedge^2 D_x \to L_x$. Fix a symplectic vector space $(V, \sigma)$ with
$\dim M = \dim V +1$.  \begin{itemize}
 \item[(1)] A {\em contact frame} at $x \in M$ is  a linear isomorphism $f: V \to D_x$  such that
 $\omega_x(f(u), f(v)) =0$ for any $u, v \in V$ satisfying $\sigma(u, v) =0$. There exists  a unique nonzero vector $\vec{f} \in L_x $ such that $$\omega_x( f(u), f(v)) = \sigma( u, v) \ \vec{f} $$  for all $ u,v \in V.$
     \item[(2)]
Denote by $\Fr_x(M,D)$  the set of all contact frames at $x$. The union $$\Fr(M,D) := \cup_{x \in M} \Fr_x(M,D)$$ is a principal ${\rm CSp}(V)$-bundle over $M$, called the {\em contact frame bundle} of $(M, D)$. The right action  $$R_g: \Fr(M,D) \to \Fr(M,D)$$ of
$g \in {\rm CSp}(V)$ is given by $$ f \in \Fr(M,D) \ \mapsto R_g (f) = f \circ g \in \Fr(M,D).$$ Note that the subgroup ${\rm Sp}(V) \subset {\rm CSp}(V)$ acts trivially on $\vec{f}$, while
an element $c \cdot {\rm Id}_V, c \in \C^{\times},$ sends $\vec{f} $ to $c^{2} \vec{f}.$
\item[(3)] For a closed subgroup $G \subset {\rm CSp}(V)$,
a $G$-principal subbundle $\sP \subset \Fr(M,D)$ is called a {\em contact} $G$-{\em structure} on $M$. Denote by $\pi: \sP \to M$ the natural projection.
\item[(4)] In (3),  denote by $\lambda$ the unique 1-form on $\sP$ whose value $\lambda(\vec{w}) \in \C$ at $\vec{w} \in T_{\sP, f}$ satisfies $$ \lambda(\vec{w}) \vec{f} = {\rm d} \pi (\vec{w}) \mod D_x, \ x = \pi(f).$$
  Let $\sD \subset T_{\sP}$ be the  subbundle of corank 1 annihilated by $\lambda$.  The  bundle $T^{\pi} := {\rm Ker}( {\rm d} \pi)$ is contained in $\sD$ and there is a natural isomorphism  of vector bundles $\sD / T^{\pi} \cong \pi^* D$.
\end{itemize}
\end{definition}

The following is our main result on contact $G$-structures.

\begin{theorem}\label{t.csplit}
Let $G \subset {\rm CSp}(V)$ be a closed  subgroup such that its Lie algebra $\fg \subset \fcsp(V)$ has no nonzero contact prolongation. Let $\sP \subset \Fr(M,D)$ be a contact $G$-structure on a contact manifold $(M,D)$ with $\dim M = \dim V +1$ and let $T^{\pi} \subset \sD \subset T_{\sP}$ be as in Definition \ref{d.frame} (4). Then there exists a
subbundle $\sH \subset \sD$ inducing a splitting $\sD \cong T^{\pi} \oplus \sH$.
\end{theorem}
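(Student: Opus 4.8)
The plan is to run the standard prolongation argument for $G$-structures, adapted to the contact (filtered) setting, where the relevant first prolongation is the space of contact prolongations of $\fg$ introduced in Definition \ref{d.prolong}. First I would fix a contact $G$-structure $\pi:\sP\to M$ and construct a tautological (soldering) $V$-valued $1$-form $\theta$ on $\sP$: at a point $f\in\sP$ lying over $x\in M$, and for $\vec{w}\in T_{\sP,f}$, set $\theta(\vec{w}) = f^{-1}\big({\rm d}\pi(\vec{w}) \bmod L_x\big)$, using that $f:V\to D_x$ is an isomorphism and that ${\rm d}\pi(\vec w)$ can be reduced modulo $D_x$ only up to the scaling recorded by $\lambda$ — so $\theta$ is well-defined as a form with values in $V$ once one is slightly careful with the conformal factor; precisely, $\theta$ is defined on $\sD=\ker\lambda$, which is enough. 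The kernel of $\theta$ restricted to $\sD$ is exactly $T^\pi$, and $\theta|_\sD$ descends to the isomorphism $\sD/T^\pi\cong\pi^*D$ of Definition \ref{d.frame}(4). A choice of splitting $\sH\subset\sD$ with $\sD=T^\pi\oplus\sH$ is the same as a choice of a $V$-valued $1$-form $\eta$ on $\sD$ that restricts to the canonical identification $T^\pi\cong\fg$ (via the fundamental vector fields of the $G$-action) — equivalently, $\sH=\ker\eta$; so the theorem amounts to producing a globally defined such $\eta$, i.e.\ a partial connection form for the $G$-action along $\sD$.

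Next I would show that such forms $\eta$ exist locally and measure the ambiguity. Locally on $M$ one can always pick a local section of $\sP$, hence a local trivialization $\sP|_U\cong U\times G$, and on $U\times G$ the Maurer–Cartan form of $G$ pulled back gives a local candidate $\eta_0$ restricting correctly to $T^\pi$. Any two candidates $\eta$, $\eta'$ differ by a form vanishing on $T^\pi$ and $G$-equivariant in the appropriate sense, hence by $A\circ\theta$ for a function $A:\sP\to\Hom(V,\fg)$ that is $G$-equivariant (with $G$ acting on $\Hom(V,\fg)$ through its adjoint and standard actions). The torsion/structure function of a candidate $\eta$ is defined by writing the ``${\rm d}\theta$ along $\sD$'' in terms of $\theta\wedge\theta$ and $\eta\wedge\theta$: concretely, restricting ${\rm d}\theta + \text{(terms built from }\eta\text{)}$ to $\sD\wedge\sD$ produces a $\Hom(\wedge^2V,V)$-valued function, and the key computation — here the contact condition $\omega_x(f(u),f(v))=\sigma(u,v)\vec f$ enters, bringing in the term $\sigma(u,v)\vec A$ — is that changing $\eta$ by $A\circ\theta$ changes this function exactly by $\delta A$, with $\delta$ the coboundary operator of Definition \ref{d.prolong}(3). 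This is the heart of the argument and the step I expect to require the most care: one must get the normalization of the conformal factor right so that the ``$\sigma(u,v)\vec A$'' correction appears, and one must check that the torsion function indeed takes values in the image-complement controlled by $\delta$, i.e.\ that the symbol algebra cohomology is the relevant obstruction; this is exactly where the contact setting differs from the ordinary Sternberg prolongation of \cite{St} and where Morimoto's filtered framework \cite{Mr} is being made explicit.

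Finally I would invoke the hypothesis. The difference of two candidates is $A\circ\theta$ with $\delta A=0$, i.e.\ $A$ is a contact prolongation of $\fg$ pointwise; since $\fg$ has no nonzero contact prolongation, $A\equiv 0$, so the candidate $\eta$ is \emph{unique} once one additionally normalizes away the torsion. More precisely: the torsion function of $\eta_0$ takes values in $\Hom(\wedge^2V,V)$; choosing a $G$-invariant linear complement $W$ to $\Im(\delta)$ (possible after averaging, or simply because we only need local complements glued by equivariance) we can, on each local patch and by the surjectivity of $\delta$ onto its image together with vanishing of its kernel, adjust $\eta_0$ by a \emph{unique} $A\circ\theta$ so that the torsion lands in $W$; uniqueness of the adjustment is precisely the triviality of $\ker\delta$. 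Because the local adjusted forms are unique, they agree on overlaps and patch to a global $1$-form $\eta$ on $\sD$, whence $\sH:=\ker\eta$ is a global subbundle with $\sD\cong T^\pi\oplus\sH$. (For the statement as given one does not even need the torsion normalization — any global $\eta$ restricting correctly to $T^\pi$ suffices to define $\sH$; the torsion step is only needed if one wants a canonical $\sH$. I would present the uniqueness-via-no-prolongation mechanism as the engine that upgrades local existence to a global splitting.)
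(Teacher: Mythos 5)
Your plan is essentially the paper's own argument: define the torsion $\Pi_H$ of a horizontal complement $H\subset\sD_f$, show that changing the complement changes the torsion exactly by $\delta s_{H,H'}$ (the computation you rightly flag as the delicate step, carried out in the paper via the forms $\theta^{\sL_H}$ in Lemmas \ref{l.Sternberg}--\ref{l.cformula}), fix a complement $W$ to $\Im(\delta)$, and deduce from the injectivity of $\delta$ --- which is precisely the no-contact-prolongation hypothesis --- that each fiber has a \emph{unique} complement with torsion in $W$, whence the pointwise choices assemble into a global subbundle. Two small corrections: the complement $W$ need not (and in general cannot) be chosen $G$-invariant --- the paper fixes an arbitrary $W$ once and for all, and the resulting $\sH$ is accordingly not $G$-equivariant (see the Remark following Theorem \ref{t.csplit}) --- and the connection-type form whose kernel cuts out $\sH$ should be $\fg$-valued rather than $V$-valued.
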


\begin{remark} The subbundle $\sH$ in Theorem \ref{t.csplit} is not necessarily $G$-equivariant. So it is not a (partial) connection on the principal bundle $\sP$. \end{remark}

The rest of the section is devoted to the proof of Theorem \ref{t.csplit}.
We start with some definitions.

\begin{definition}\label{d.csolder}
For a contact $G$-structure  $\sP \subset \Fr(M,D),$ we have the bundles $T^{\pi} \subset \sD \subset T_{\sP}$ from
Definition \ref{d.frame}. \begin{itemize}
\item[(1)]  For each $f \in \sP$ and $x = \pi(f) \in M$, let $\theta_f: \sD_{f} \to V$ be the
homomorphism obtained by the composition
$$ \sD_f \stackrel{{\rm d} \pi}{\longrightarrow} D_x \stackrel{f^{-1}}{\longrightarrow} V$$ such that ${\rm Ker}(\theta_f) = T^{\pi}_f$. Let $\theta: \sD \to \sP \times V$ be the homomorphism of vector bundles on $\sP$ given by $\{ \theta_f, \ f \in \sP\}.$.
\item[(2)]
Suppose there exist a line subbundle $\sL \subset T_M$ such that $ D \cap \sL=0$.
Define a $V$-valued 1-form $\theta^{\sL}$ on $\sP$ by
\begin{eqnarray*} \theta^{\sL} (\vec{w}) & = & \theta(\vec{w}) \mbox{ if } \vec{w} \in \sD \mbox{ and } \\
\theta^{\sL} (\vec{w}) &=& 0 \mbox{ if } {\rm d} \pi (\vec{w}) \in \sL.
\end{eqnarray*} \end{itemize}
\end{definition}

\begin{lemma}\label{l.Sternberg}
In the notation of Definitions \ref{d.frame} and  \ref{d.csolder},   denote by $\widetilde{a}$ the fundamental vector field on $\sP$ generated by $a \in \fg$.   Then  for any
    $\vec{w} \in T_{\sP}$, we have
   $$ {\rm d } \lambda (\widetilde{a}, \vec{w}) = - \frac{2}{\dim V} \ \tr(a) \ \lambda (\vec{w}) \mbox{  and  }
{\rm d} \theta^{\sL} ( \widetilde{a}, \vec{w} ) = - a \cdot \theta^{\sL}( \vec{w} ).$$
 \end{lemma}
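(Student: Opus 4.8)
The plan is to compute both sides using the Cartan formula $\mathrm{d}\alpha(\vec{X},\vec{Y}) = \vec{X}\alpha(\vec{Y}) - \vec{Y}\alpha(\vec{X}) - \alpha([\vec{X},\vec{Y}])$, exploiting the fact that the flow of the fundamental vector field $\widetilde{a}$ is the one-parameter group of right translations $R_{\exp(ta)}$ on $\sP$. The key computational inputs are the transformation rules for $\lambda$ and $\theta^{\sL}$ under $R_g$: since $R_g$ covers the identity on $M$ and sends a contact frame $f$ to $f\circ g$, one checks directly from Definition \ref{d.frame}(4) that $R_g^*\lambda = c^{-2}\lambda$ when $g$ acts on $\vec{f}$ by multiplication by $c^2$ — equivalently, writing $g = \exp(ta)$ with $a \in \fg$, one gets $R_{\exp(ta)}^*\lambda = \exp\!\left(-\tfrac{2t}{\dim V}\tr(a)\right)\lambda$ because the scaling factor $c^2$ attached to $a$ is $\exp\!\left(\tfrac{2t}{\dim V}\tr(a)\right)$ (the $\Sp(V)$-part acts trivially on $\vec{f}$ and only the trace of $a$, i.e.\ its $\C^\times\cdot\mathrm{Id}_V$-component, contributes). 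Similarly $R_g^*\theta = g^{-1}\circ\theta$ on $\sD$ (this is immediate from $\theta_{f\circ g} = g^{-1}\circ\theta_f$, which follows from the definition $\theta_f = f^{-1}\circ\mathrm{d}\pi$), and since $R_g$ preserves the subspaces where $\mathrm{d}\pi$ lands in $\sL$, the same holds for $\theta^{\sL}$: $R_g^*\theta^{\sL} = g^{-1}\cdot\theta^{\sL}$.

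From these transformation rules, the Lie derivatives along $\widetilde{a}$ are obtained by differentiating at $t=0$: $\mathcal{L}_{\widetilde{a}}\lambda = -\tfrac{2}{\dim V}\tr(a)\,\lambda$ and $\mathcal{L}_{\widetilde{a}}\theta^{\sL} = -a\cdot\theta^{\sL}$. Now I combine this with Cartan's magic formula $\mathcal{L}_{\widetilde{a}}\alpha = \iota_{\widetilde{a}}\mathrm{d}\alpha + \mathrm{d}(\iota_{\widetilde{a}}\alpha)$. For the $\lambda$ identity: $\iota_{\widetilde{a}}\lambda = \lambda(\widetilde{a}) = 0$ because $\widetilde{a}$ is tangent to the fibers of $\pi$, hence $\mathrm{d}\pi(\widetilde{a}) = 0$ and in particular lies in $D$ mod nothing, giving $\lambda(\widetilde{a})=0$; therefore $\iota_{\widetilde{a}}\mathrm{d}\lambda = \mathcal{L}_{\widetilde{a}}\lambda = -\tfrac{2}{\dim V}\tr(a)\,\lambda$, which evaluated on $\vec{w}$ is exactly the first asserted formula. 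For the $\theta^{\sL}$ identity: again $\widetilde{a}$ is $\pi$-vertical, so $\mathrm{d}\pi(\widetilde{a}) = 0$ lies (trivially) in the image of $\sL$, and therefore $\theta^{\sL}(\widetilde{a}) = 0$ by the second defining clause of $\theta^{\sL}$ in Definition \ref{d.csolder}(2); hence $\iota_{\widetilde{a}}\mathrm{d}\theta^{\sL} = \mathcal{L}_{\widetilde{a}}\theta^{\sL} = -a\cdot\theta^{\sL}$, which on $\vec{w}$ gives the second formula.

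The main point requiring care — and the only place where the contact (as opposed to ordinary) setting enters — is pinning down the scaling constant $c^2 = \exp\!\left(\tfrac{2t}{\dim V}\tr(a)\right)$ in $R_{\exp(ta)}^*\lambda$. This comes from unwinding the relation $\omega_x(f(u),f(v)) = \sigma(u,v)\,\vec{f}$: replacing $f$ by $f\circ g$ multiplies the left side by the conformal factor of $g$ on the symplectic form, and for $g = c\cdot h$ with $h\in\Sp(V)$ this factor is $c^2$, while on the Lie algebra level an element $a\in\fcsp(V)$ decomposes so that its scalar part is $\tfrac{1}{\dim V}\tr(a)\cdot\mathrm{Id}_V$ by the defining identity in Definition \ref{d.prolong}(1), giving the stated exponent. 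Everything else is the routine verification that $\lambda$ and $\theta^{\sL}$ vanish on $\pi$-vertical vectors, which is built into their definitions. Note the identity $\mathrm{d}\theta^{\sL}(\widetilde{a},\vec{w}) = -a\cdot\theta^{\sL}(\vec{w})$ holds for \emph{all} $\vec{w}\in T_{\sP}$ (not just $\vec{w}\in\sD$), since $\theta^{\sL}$ is globally defined once $\sL$ is chosen; no such $\sL$ need be globally available for the statement about $\lambda$, which is intrinsic.
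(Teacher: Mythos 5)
Your proposal is correct and follows essentially the same route as the paper: establish the equivariance rules $R_g^*\lambda = c^{-2}\lambda$ and $R_g^*\theta^{\sL} = g^{-1}\cdot\theta^{\sL}$, differentiate along the flow $R_{\exp(ta)}$ to get the Lie derivatives, and conclude via Cartan's formula using $\lambda(\widetilde{a}) = \theta^{\sL}(\widetilde{a}) = 0$. The only cosmetic difference is that the paper first reduces to $G = {\rm CSp}(V)$ by restriction, whereas you pin down the conformal factor $c^2 = \exp\bigl(\tfrac{2t}{\dim V}\tr(a)\bigr)$ directly from the Lie-algebra decomposition; both are fine.
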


\begin{proof}  It suffices to prove it when $G= {\rm CSp}(V),$ because  the forms $\lambda$ and $\theta^{\sL}$ for a subbundle $\sP \subset \Fr(M,D)$ are just restrictions
of the corresponding forms  on $\Fr(M,D)$.

We claim that
\begin{itemize}
\item[(i)] $R_g^* \lambda = c^{-2} \lambda$ for $g = (g',c \ {\rm Id}_V) \in  {\rm Sp}(V) \times \C^{\times} \cdot {\rm Id}_V$; and
    \item[(ii)]
  $R_g^* \theta^{\sL} = g^{-1} \cdot \theta^{\sL}$  for any $g \in {\rm CSp}(V).$ \end{itemize}
To prove (i),  note that  ${\rm d} \pi (\vec{w}) = {\rm d} \pi (R_{g*} \vec{w})$ for $\vec{w} \in T_{\sP, f}$.
From Definition \ref{d.frame}, we obtain, modulo $D_x, x = \pi(f),$
$$\lambda( R_{g*} \vec{w}) R_g(\vec{f}) \equiv  {\rm d} \pi (R_{g*} \vec{w})  \equiv {\rm d} \pi (\vec{w}) \equiv \lambda(\vec{w}) \vec{f}.$$ Since $R_g(\vec{f}) = c^2 \ \vec{f}$, we obtain (i).

To prove (ii),
if $\vec{w} \in \sD_f$,
$$ \theta^{\sL}(R_{g*} \vec{w}) =  (f \circ g)^{-1} ({\rm d} \pi (\vec{w})) = g^{-1} \circ f^{-1} ({\rm d } \pi (\vec{w})) = g^{-1} \cdot \theta^{\sL}(\vec{w}).$$
On the other hand, if ${\rm d} \pi (\vec{w}) \in \sL$, then ${\rm d} \pi (R_{g_*}\vec{w}) \in \sL$. Thus
$$\theta^{\sL}(R_{g*} \vec{w}) = 0 = \theta^{\sL}(\vec{w}).$$ This proves (ii).

The derivative of (i) shows
$${\rm Lie}_{\widetilde{a}} \lambda = - \frac{2}{\dim V} \tr (a)  \  \lambda$$
where ${\rm Lie}_{\widetilde{a}}$ denotes the Lie derivative with respect to the vector field $\widetilde{a}$.
Since $\lambda(\widetilde{a}) = 0$, Cartan's formula for the Lie derivative of forms (e.g, Equation (1.9) in Chapter 3 of \cite{St}) gives
${\rm d} \lambda (\widetilde{a}, \vec{w}) = -\frac{2}{ \dim V} \ \tr(a)  \ \lambda(\vec{w}).$
Similarly, the derivative of (ii) shows
$${\rm Lie}_{\widetilde{a}} \theta^{\sL}= - a \cdot \theta^{\sL}.$$ Since $\theta^{\sL}( \widetilde{a}) = 0$, Cartan's formula gives ${\rm d} \theta^{\sL} ( \widetilde{a}, \vec{w}) = - a \cdot \theta^{\sL}( \vec{w} ).$
\end{proof}

\begin{lemma}\label{l.ii}
In the notation of Definition \ref{d.frame} (4), let $H \subset \sD_f$ be a subspace such that
$\sD_f = T^{\pi}_f \oplus H.$
For $u, v \in V$, let $u^H, v^H \in H$ be the corresponding vectors by the two isomorphisms $$V \stackrel{f}{\longrightarrow} D_x \stackrel{{\rm d}\pi}{\longleftarrow} H.$$ Then
${\rm d} \lambda (u^H, v^H) = - \sigma( u, v).$
\end{lemma}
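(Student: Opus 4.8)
The plan is to compute ${\rm d}\lambda$ at the point $f$ by means of the Cartan formula ${\rm d}\lambda(X,Y) = X(\lambda(Y)) - Y(\lambda(X)) - \lambda([X,Y])$, applied to vector field extensions of $u^H$ and $v^H$ chosen so that the first two terms drop out. First I would fix local sections $\xi, \eta$ of $D$ on a neighbourhood of $x = \pi(f)$ in $M$ with $\xi(x) = f(u)$ and $\eta(x) = f(v)$. Since ${\rm d}\pi$ maps $\sD$ onto $\pi^* D$ with kernel $T^{\pi}$, I can lift $\xi$ and $\eta$ to local vector fields $U, V$ on $\sP$ that are sections of $\sD$ and are $\pi$-related to $\xi$ and $\eta$ respectively; after correcting by sections of $T^{\pi}$ I may further arrange $U_f = u^H$ and $V_f = v^H$, this being possible because $U_f$ and $u^H$ both project to $f(u) = \xi(x)$ under ${\rm d}\pi$ and hence differ by an element of $T^{\pi}_f$, and similarly for $V_f$ and $v^H$.

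Since $\sD = {\rm Ker}(\lambda)$ by Definition \ref{d.frame} (4), the functions $\lambda(U)$ and $\lambda(V)$ vanish identically along $\sP$, so the Cartan formula yields
$$ {\rm d}\lambda(u^H, v^H) = ({\rm d}\lambda)_f(U, V) = - \lambda_f([U,V]_f), $$
and this value is independent of all choices because ${\rm d}\lambda$ is a $2$-form. As $U$ is $\pi$-related to $\xi$ and $V$ to $\eta$, the bracket $[U,V]$ is $\pi$-related to $[\xi,\eta]$, so ${\rm d}\pi([U,V]_f) = [\xi,\eta](x)$. By the defining property of $\lambda$ in Definition \ref{d.frame} (4), and because $\omega$ is the homomorphism induced by the Lie bracket of local sections of $D$,
$$ \lambda_f([U,V]_f)\, \vec{f} = {\rm d}\pi([U,V]_f) \mod D_x = [\xi,\eta](x) \mod D_x = \omega_x(f(u), f(v)). $$
Finally, since $f \in \sP$ is a contact frame, $\omega_x(f(u), f(v)) = \sigma(u,v)\, \vec{f}$ by Definition \ref{d.frame} (1), so $\lambda_f([U,V]_f) = \sigma(u,v)$ and therefore ${\rm d}\lambda(u^H, v^H) = -\sigma(u,v)$.

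There is no essential difficulty here: the statement is the contact analogue of the first structure equation for the tautological form on a classical frame bundle, and the argument above makes it explicit. The only points demanding attention are the construction of the lifts $U, V$ that are simultaneously tangent to $\sD$, $\pi$-related to sections of $D$, and equal to $u^H, v^H$ at $f$, and keeping the two identifications $V \stackrel{f}{\longrightarrow} D_x \stackrel{{\rm d}\pi}{\longleftarrow} H$ consistent so that ${\rm d}\pi(u^H) = f(u)$ exactly; everything else is routine bookkeeping.
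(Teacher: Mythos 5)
Your proof is correct and follows essentially the same route as the paper: both apply the Cartan formula to local sections of $\sD$ extending $u^H$ and $v^H$, use that $\lambda$ vanishes on $\sD$ to kill the first two terms, and identify $-\lambda([U,V]_f)\,\vec{f}$ with $-\omega_x(f(u),f(v)) = -\sigma(u,v)\,\vec{f}$ via the contact-frame property. Your extra care in choosing the lifts to be $\pi$-related to sections of $D$ is a legitimate way to justify the step ${\rm d}\pi([U,V]_f) \equiv [\xi,\eta](x) \bmod D_x$ that the paper leaves implicit.
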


\begin{proof}
Choose local sections $\widetilde{u}$ and $\widetilde{v}$ of $\sD$ near $f \in \sP$ whose values at $f$ are
$u^H$ and $v^H$, respectively. Then \begin{eqnarray*}
 {\rm d} \lambda (\widetilde{u}, \widetilde{v})
&=& \widetilde{u}(\lambda(\widetilde{v})) - \widetilde{v}(\lambda(\widetilde{u}))  - \lambda ([\widetilde{u}, \widetilde{v}]) \\ & = &  - \lambda ([\widetilde{u}, \widetilde{v}]). \end{eqnarray*}
Thus \begin{eqnarray*} {\rm d} \lambda (u^H, v^H)  \ \vec{f}  &=& {\rm d} \lambda (\widetilde{u}, \widetilde{v})_f \ \vec{f} \\
&=&  - \lambda([\widetilde{u}, \widetilde{v}])_f \ \vec{f} \\ &=& - {\rm d} \pi([\widetilde{u}, \widetilde{v}])_x \   \mod D_x, \ x = \pi (f) \\
&=&  - \omega_x ({\rm d} \pi (u^H), {\rm d} \pi (v^H)) \\
&=& - \sigma( u, v) \ \vec{f}. \end{eqnarray*}
\end{proof}

\begin{lemma}\label{l.null}
As in Lemma \ref{l.ii}, let $H \subset \sD_f$ be a subspace such that
$\sD_f = T^{\pi}_f \oplus H.$ Define
$${\rm Null}^{{\rm d} \lambda}_H :=\{ \vec{w} \in T_{\sP, f}, \ {\rm d} \lambda (\vec{w}, \vec{v}) =0 \mbox{ for all } \vec{v} \in H\}.$$ Then there exists a unique 1-dimensional subspace $\ell_H \subset T_{M,x}, x = \pi(f)$, such that
$$ ({\rm d}_{f} \pi)^{-1} (\ell_H) = {\rm Null}^{{\rm d} \lambda}_H \ \mbox{ and } \ {\rm Null}^{{\rm d} \lambda}_H /T^{\pi}_f  \cong  \ell_H.$$ \end{lemma}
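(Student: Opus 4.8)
The plan is to read off Lemma \ref{l.null} from the two facts already in hand: Lemma \ref{l.Sternberg} tells us that $T^{\pi}_f$ pairs trivially with $\sD$ under ${\rm d}\lambda$ (after noting $\lambda$ kills $\sD$), while Lemma \ref{l.ii} tells us that ${\rm d}\lambda$ restricts to a \emph{nondegenerate} skew pairing on $H$. Everything then reduces to linear algebra and a dimension count.

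\textbf{Step 1: $T^{\pi}_f \subseteq {\rm Null}^{{\rm d}\lambda}_H$.} The fiber $T^{\pi}_f = \ker({\rm d}_f\pi)$ is spanned by the values $\widetilde{a}_f$ of the fundamental vector fields of elements $a \in \fg$. For any $\vec{v} \in H \subseteq \sD = \ker \lambda$ we have $\lambda(\vec{v}) = 0$, so Lemma \ref{l.Sternberg} gives ${\rm d}\lambda(\widetilde{a}_f, \vec{v}) = -\tfrac{2}{\dim V}\tr(a)\,\lambda(\vec{v}) = 0$. Hence every fundamental vector at $f$, and therefore all of $T^{\pi}_f$, lies in ${\rm Null}^{{\rm d}\lambda}_H$.

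\textbf{Step 2: dimension of ${\rm Null}^{{\rm d}\lambda}_H$.} Consider the contraction map $\iota\colon T_{\sP,f}\to H^\ast$, $\vec{w}\mapsto {\rm d}\lambda(\vec{w},\,\cdot\,)|_{H}$, whose kernel is precisely ${\rm Null}^{{\rm d}\lambda}_H$. By Lemma \ref{l.ii}, writing $u^H,v^H\in H$ for the vectors corresponding to $u,v\in V$, we have ${\rm d}\lambda(u^H,v^H) = -\sigma(u,v)$; since $\sigma$ is nondegenerate and $H\cong V$ via $({\rm d}\pi)^{-1}\circ f$, the restriction $\iota|_H\colon H\to H^\ast$ is an isomorphism. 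Thus $\iota$ is surjective, so $\dim {\rm Null}^{{\rm d}\lambda}_H = \dim T_{\sP,f} - \dim H = (\dim V + 1 + \dim\fg) - \dim V = \dim\fg + 1$, which is one more than $\dim T^{\pi}_f = \dim\fg$.

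\textbf{Step 3: producing and pinning down $\ell_H$.} Set $\ell_H := {\rm d}_f\pi\big({\rm Null}^{{\rm d}\lambda}_H\big)\subseteq T_{M,x}$. Because ${\rm Null}^{{\rm d}\lambda}_H$ contains $\ker({\rm d}_f\pi) = T^{\pi}_f$ with quotient of dimension $1$ by Step 2, the subspace $\ell_H$ is $1$-dimensional and ${\rm d}_f\pi$ descends to an isomorphism ${\rm Null}^{{\rm d}\lambda}_H/T^{\pi}_f \cong \ell_H$. Since $T^{\pi}_f\subseteq {\rm Null}^{{\rm d}\lambda}_H$, the general identity $p^{-1}(p(W)) = W + \ker p$ for a surjection $p$ gives $({\rm d}_f\pi)^{-1}(\ell_H) = {\rm Null}^{{\rm d}\lambda}_H + T^{\pi}_f = {\rm Null}^{{\rm d}\lambda}_H$. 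Uniqueness is automatic: if $\ell\subseteq T_{M,x}$ satisfies $({\rm d}_f\pi)^{-1}(\ell) = {\rm Null}^{{\rm d}\lambda}_H$, then applying ${\rm d}_f\pi$ (which is surjective) yields $\ell = {\rm d}_f\pi\big({\rm Null}^{{\rm d}\lambda}_H\big) = \ell_H$.

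I do not expect a genuine obstacle here; the argument is elementary once Lemmas \ref{l.Sternberg} and \ref{l.ii} are available. The only point needing care is the bookkeeping: $\dim T_{\sP,f} = \dim V + 1 + \dim\fg$ whereas $\dim\sD_f = \dim V + \dim\fg$, so ${\rm Null}^{{\rm d}\lambda}_H$ is \emph{strictly} larger than $T^{\pi}_f$ by exactly one dimension, and this "extra" direction (coming from the corank-$1$ defect of ${\rm d}\lambda$ relative to $\lambda$, not visible inside $\sD$) is precisely what gives the line $\ell_H$.
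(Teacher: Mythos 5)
Your proposal is correct and takes essentially the same route as the paper: Lemma \ref{l.Sternberg} gives $T^{\pi}_f \subseteq {\rm Null}^{{\rm d}\lambda}_H$, Lemma \ref{l.ii} gives nondegeneracy of ${\rm d}\lambda|_H$, and a dimension count produces the line $\ell_H$. The only (cosmetic) difference is that you get the count $\dim {\rm Null}^{{\rm d}\lambda}_H = \dim\fg+1$ directly from rank--nullity for the contraction map $T_{\sP,f}\to H^{\ast}$, whereas the paper chooses an auxiliary complement $H^{+}\supset H$ and invokes the oddness of $\dim H^{+}$ to see that ${\rm d}\lambda|_{H^{+}}$ must degenerate.
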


\begin{proof}  Lemma \ref{l.Sternberg}  implies $T^{\pi}_{f} \subset {\rm Null}^{{\rm d} \lambda}_H.$
 Lemma \ref{l.ii} shows that   ${\rm d} \lambda |_H$ is nondegenerate. Thus $$ H \cap {\rm Null}^{{\rm d} \lambda}_H =0.$$ Choose a subspace
$H^+ \subset T_{\sP, f} $ such that $$H \subset H^+ \mbox{ and } T_{\sP, f} = T^{\pi}_{f} \oplus H^+.$$ Then ${\rm d} \lambda |_{H^+}$ cannot be nondegenerate because $\dim H^+ = \dim D_x +1$ is odd.
It follows that $$ {\rm Null}_H^{{\rm d} \lambda} = T^{\pi}_f \oplus  (H^+ \cap {\rm Null}^{{\rm d} \lambda}_H)
\mbox{ and } \dim H^+ \cap {\rm Null}^{{\rm d} \lambda}_H = 1.$$ Thus $\ell_H := {\rm d} \pi (H^+ \cap {\rm Null}^{{\rm d} \lambda}_H)$ satisfies the requirement. It is clear that $\ell_H$ is uniquely determined by $H$. \end{proof}

\begin{lemma}\label{l.Pi} In the setting of Lemma \ref{l.null},
define $\Pi_H \in \Hom(\wedge^2 V, V)$ as follows.
Extend $\ell_H \subset T_{M,x}$ in Lemma \ref{l.null} to a line subbundle $\sL_H \subset T_M$ complementary to $D$ in a neighborhood of $x$. By Definition \ref{d.csolder} (2), we have the $V$-valued 1-form $\theta^{\sL_H}$ in a neighborhood of  $\pi^{-1}(x) \subset \sP.$
For $u, v \in V$, let $u^H, v^H \in H$ be as in Lemma \ref{l.ii} and define
$$\Pi_H(u, v) := {\rm d} \theta^{\sL_H} ( u^H, v^H).$$ Then \begin{itemize} \item[(i)] $\Pi_H$ is determined by $H$, independent of the choice of $\sL_H$;  and \item[(ii)]   $ {\rm d} \lambda (v^H, \vec{w}) = \sigma ( \theta^{\sL_H}(\vec{w}), v) $ for any $ v \in V$ and $ \vec{w} \in T_{\sP, f}.$ \end{itemize} \end{lemma}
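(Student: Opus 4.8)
The plan is to treat the two assertions separately, since neither uses the other; I would begin with the pointwise identity (ii) and then obtain (i) by a comparison argument between two choices of $\sL_H$.

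For (ii), the key observation is that $T_{\sP,f}$ splits as a direct sum $H \oplus {\rm Null}^{{\rm d}\lambda}_H$: the two subspaces meet only in $0$ because ${\rm d}\lambda|_H$ is nondegenerate (Lemma \ref{l.ii}), and their dimensions add up to $\dim T_{\sP,f}$ since $\dim {\rm Null}^{{\rm d}\lambda}_H = \dim T^{\pi}_f + 1$ by the proof of Lemma \ref{l.null}, while $\dim H = \dim V$ and $\dim M = \dim V + 1$. As both sides of the claimed identity ${\rm d}\lambda(v^H, \vec{w}) = \sigma(\theta^{\sL_H}(\vec{w}), v)$ are linear in $\vec{w}$, it suffices to verify it separately for $\vec{w} \in H$ and $\vec{w} \in {\rm Null}^{{\rm d}\lambda}_H$. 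On ${\rm Null}^{{\rm d}\lambda}_H$ the left side vanishes by definition, and the right side vanishes because ${\rm Null}^{{\rm d}\lambda}_H = T^{\pi}_f \oplus \langle \vec{\ell} \rangle$ with ${\rm d}\pi(\vec{\ell})$ spanning $\ell_H = \sL_H|_x$, so that $\theta^{\sL_H}$ kills both summands: it annihilates $T^{\pi}_f = {\rm Ker}(\theta_f)$, and it annihilates $\vec{\ell}$ by Definition \ref{d.csolder}(2). On $H$, write $\vec{w} = u^H$; then $\theta^{\sL_H}(u^H) = \theta(u^H) = u$, so the right side is $\sigma(u,v)$, while the left side is ${\rm d}\lambda(v^H, u^H) = -{\rm d}\lambda(u^H, v^H) = \sigma(u,v)$ by Lemma \ref{l.ii}. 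This gives (ii).

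For (i), let $\sL_H$ and $\sL_H'$ be two line subbundles of $T_M$ near $x$, complementary to $D$, both with fiber $\ell_H$ at $x$. The $V$-valued $1$-form $\eta := \theta^{\sL_H} - \theta^{\sL_H'}$ vanishes identically on $\sD$, since both $\theta^{\sL_H}$ and $\theta^{\sL_H'}$ restrict to $\theta$ there; as $\sD = {\rm Ker}(\lambda)$ with $\lambda$ nowhere zero, I would write $\eta = \xi\,\lambda$ for a holomorphic $V$-valued function $\xi$ near $\pi^{-1}(x)$. Moreover $\xi$ vanishes along $\pi^{-1}(x)$: for $f \in \pi^{-1}(x)$, choose $\vec{w} \in T_{\sP,f}$ with ${\rm d}\pi(\vec{w})$ a nonzero vector of $\ell_H$, so that $\lambda(\vec{w}) \ne 0$ (because $\ell_H \cap D_x = 0$) while $\eta(\vec{w}) = 0$ (both forms kill $\vec{w}$, as $\ell_H$ is the common fiber), whence $\xi(f) = 0$. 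Finally, at such an $f$, expand ${\rm d}\eta = {\rm d}\xi \wedge \lambda + \xi\,{\rm d}\lambda$: the second term is zero since $\xi(f) = 0$, and the first, evaluated on $u^H, v^H \in \sD_f = {\rm Ker}(\lambda_f)$, equals $({\rm d}\xi(u^H))\,\lambda(v^H) - ({\rm d}\xi(v^H))\,\lambda(u^H) = 0$. Hence ${\rm d}\theta^{\sL_H}(u^H, v^H) = {\rm d}\theta^{\sL_H'}(u^H, v^H)$, which is (i). I do not anticipate a serious obstacle: both parts are bookkeeping on top of Lemmas \ref{l.Sternberg}--\ref{l.null}. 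The one step deserving care is the direct-sum decomposition $T_{\sP,f} = H \oplus {\rm Null}^{{\rm d}\lambda}_H$ together with the explicit description of the null space, since this is exactly what makes the vanishing of $\theta^{\sL_H}$ on it transparent; in (i) the only point is that the dependence of $\theta^{\sL_H}$ on $\sL_H$ is carried by a term of the form $\xi\lambda$ which vanishes, in the relevant directions, to first order along $\pi^{-1}(x)$.
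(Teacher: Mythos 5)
Your proof is correct and takes essentially the same route as the paper's: part (i) is verbatim the paper's argument (write $\theta^{\sL_H}-\theta^{\sL'_H}=\xi\,\lambda$ with $\xi$ vanishing along $\pi^{-1}(x)$, then expand ${\rm d}$ of it on $\sD_f$), and your justification that $\xi$ vanishes on the fibre is if anything more explicit. In part (ii) the only difference is cosmetic: you span $T_{\sP,f}$ by $H\oplus{\rm Null}^{{\rm d}\lambda}_H$ and invoke Lemma \ref{l.ii} directly on $H$, whereas the paper uses $\sD_f+{\rm Null}^{{\rm d}\lambda}_H$ and reruns the bracket computation of Lemma \ref{l.ii} for an arbitrary $\vec{w}\in\sD_f$; the verification on the null space is identical in both.
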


\begin{proof}
To check (i), let $\sL'_H \subset T_M$ be another line subbundle in a neighborhood of $x$ complementary to $D$ such that
$$(\sL'_H)_x = (\sL_H)_x = \ell_H.$$
Then $\beta := \theta^{\sL'_H} - \theta^{\sL_H}$ is a $V$-valued 1-form in a neighborhood of $\pi^{-1}(x)$ which vanishes on $\sD$. Thus we can write $\beta = h \cdot \lambda$ for some $V$-valued function $h$. We have $h|_{\pi^{-1}(x)} = 0$ from $(\sL'_H)_x = (\sL_H)_x.$ Then $$ {\rm d} \theta^{\sL'_H} - {\rm d} \theta^{\sL_H}= {\rm d} \beta = {\rm d} h \wedge \lambda + h \ {\rm d} \lambda$$ satisfies
${\rm d} \beta (\vec{u}, \vec{v}) = 0$ for any $\vec{u}, \vec{v} \in \sD_f,$ because $\lambda (\vec{u}) = \lambda (\vec{v}) =0$ and $h(f) =0$. Thus $\Pi_H$ does not depend on the choice of $\sL_H$.

To check (ii), first assume that $\vec{w} \in \sD_f$.  Then $\theta^{\sL_H}(\vec{w}) = f^{-1} ({\rm d} \pi (\vec{w})).$ As in the proof of Lemma \ref{l.ii}, choose  local sections $\widetilde{v}$ and $\widetilde{w}$ of $\sD$ near $f \in \sP$   whose values at $f$ are $v^H$ and $\vec{w}$, respectively. Then the same argument as in the proof of Lemma \ref{l.ii}  shows
\begin{eqnarray*}   {\rm d} \lambda (v^H, \vec{w}) \ \vec{f}
& = & - \omega_x( {\rm d} \pi (v^H), {\rm d} \pi (\vec{w}))  \\ &=& - \sigma( v, \theta^{\sL_H}(\vec{w})) \ \vec{f}. \end{eqnarray*}
 This proves (ii) when $\vec{w} \in \sD_{(f, \vec{f})}.$
Since ${\rm Null}^{{\rm d} \lambda}_H \cap \sD_f$ has codimension 1 in ${\rm Null}^{{\rm d} \lambda}_H,$
it remains to check  (ii)  when $\vec{w} \in {\rm Null}^{{\rm d}\lambda}_H$. But in this case,
the left hand side of (ii) vanishes by the definition of ${\rm Null}^{{\rm d} \lambda}_H$, while the right hand side of (ii) vanishes by the definitions of $\ell_H$ and  $\theta^{\sL_H}$.
This completes the proof of (ii).
\end{proof}

\begin{definition}\label{d.cs}
In the setting of Lemma \ref{l.Pi}, let $H$ and $H'$ be two subspaces of $\sD_f$ complementary to $T^{\pi}_f$. Then we have  $s_{H,H'} \in \Hom(V, \fg)$  defined in the following way. For $u \in V$, we have vectors
$u^H \in H$ and $u^{H'} \in H'$ defined as in Lemma \ref{l.ii}. Then $u^{H} - u^{H'} \in T^{\pi}_{f}$. We define $s_{H,H'}(u) \in \fg$ as the unique element such that  $$ \widetilde{ s_{H, H'}(u)}_f = u^H - u^{H'},$$ namely,
the fundamental vector field on $\sP$ generated by $s_{H,H'}(u)$ has value $u^{H} - u^{H'}$ at $f$.
\end{definition}

The following is immediate.

\begin{lemma}\label{l.cs}
In Definition \ref{d.cs}, fix a choice of $H \subset \sD_{f}$ complementary to $T^{\pi}_{f}$. Then the map $H' \mapsto s_{H, H'} \in \Hom(V, \fg)$ gives an identification of the set of complementary subspaces of $T^{\pi}_{f} \subset \sD_{f}$ and $\Hom(V, \fg)$. \end{lemma}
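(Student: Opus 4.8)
The plan is to recognize Lemma \ref{l.cs} as the standard fact that the linear complements of a subspace form an affine space over the appropriate space of homomorphisms, once one unwinds the two identifications built into Definitions \ref{d.frame} and \ref{d.cs}.

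First I would record those identifications. Since $\pi : \sP \to M$ is a $G$-principal bundle, the assignment $a \mapsto \widetilde{a}_f$, sending $a \in \fg$ to the value at $f$ of its fundamental vector field, is a linear isomorphism $\fg \cong T^{\pi}_f$; this is exactly the map used to define $s_{H,H'}(u) \in \fg$. On the other side, ${\rm d}_f \pi$ induces an isomorphism $\sD_f / T^{\pi}_f \cong D_x$, and composing with $f^{-1} : D_x \to V$ gives an isomorphism $\sD_f/T^{\pi}_f \cong V$. For a complement $H$ of $T^{\pi}_f$ in $\sD_f$, the map ${\rm d}_f \pi$ restricts to an isomorphism $H \cong D_x$, and the vector $u^H \in H$ of Lemma \ref{l.ii} is precisely $({\rm d}_f\pi|_H)^{-1}(f(u))$; in particular $u \mapsto u^H$ is linear in $u$.

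Next I would invoke the elementary torsor statement: for a subspace $U$ of a finite-dimensional vector space $W$, fixing one complement $H$, every complement $H'$ is the graph of a unique linear map over $H$, and $H' \mapsto \big(\bar w \mapsto (\mbox{lift of }\bar w\mbox{ in }H) - (\mbox{lift of }\bar w\mbox{ in }H')\big)$ is a bijection from the set of complements of $U$ in $W$ onto $\Hom(W/U, U)$. Applying this with $W = \sD_f$, $U = T^{\pi}_f$ and the fixed complement $H$, and transporting $W/U$ to $V$ and $U$ to $\fg$ via the isomorphisms above, the difference map sends $H'$ to $u \mapsto u^H - u^{H'}$, read as an element of $\fg$ through $\fg \cong T^{\pi}_f$ --- which is exactly $s_{H,H'}$. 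This yields the claimed bijection.

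Since the statement is flagged as immediate, I do not expect a real obstacle; the only two points deserving a line are linearity of $u \mapsto u^H - u^{H'}$ (clear from the formula above, as $u \mapsto u^H$ and $u \mapsto u^{H'}$ are both linear) and surjectivity, which I would settle by constructing the complement explicitly: given $s \in \Hom(V, \fg)$, put $H' := \{\, u^H - \widetilde{s(u)}_f : u \in V \,\}$. Because $\widetilde{s(u)}_f \in T^{\pi}_f = \Ker({\rm d}_f\pi)$ we have ${\rm d}_f\pi(u^H - \widetilde{s(u)}_f) = f(u)$, so $u \mapsto u^H - \widetilde{s(u)}_f$ is a linear injection $V \hookrightarrow \sD_f$ whose composite with ${\rm d}_f\pi$ is $f$; hence $H'$ is a complement of $T^{\pi}_f$ with $u^{H'} = u^H - \widetilde{s(u)}_f$, giving $s_{H,H'} = s$. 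Injectivity is automatic, since $s_{H,H'} = s_{H,H''}$ forces $u^{H'} = u^{H''}$ for every $u$, hence $H' = H''$.
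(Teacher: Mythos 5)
Your argument is correct and is exactly the standard torsor-of-complements reasoning that the paper treats as immediate (it offers no proof of Lemma \ref{l.cs}): the fundamental-vector-field isomorphism $\fg \cong T^{\pi}_f$ and the isomorphism $\sD_f/T^{\pi}_f \cong V$ via ${\rm d}\pi$ and $f^{-1}$ turn the affine space of complements into $\Hom(V,\fg)$, with $s_{H,H'}$ as the difference map. Your explicit construction of $H'$ from a given $s$ correctly settles surjectivity, and injectivity is as you say.
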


\begin{lemma}\label{l.vec}
In Definition \ref{d.cs},
let $\vec{s}_{H,H'} \in V$ be the vector associated to $s_{H,H'} \in \Hom(V, \fg)$ as in Definition \ref{d.prolong} (2). Then   $$- \lambda \cdot \vec{s}_{H,H'} = \theta^{\sL_H} - \theta^{\sL_{H'}} $$ on $ T_{\sP,f}.$ Consequently,
$$- {\rm d} \lambda (\vec{u}, \vec{v}) \ \vec{s}_{H,H'}  = {\rm d} \theta^{\sL_{H}} (\vec{u}, \vec{v}) - {\rm d} \theta^{\sL_{H'}} (\vec{u}, \vec{v}) $$ for any $\vec{u}, \vec{v} \in \sD_f.$\end{lemma}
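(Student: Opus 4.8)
The plan is to compare the two 1-forms $\theta^{\sL_H}$ and $\theta^{\sL_{H'}}$ directly on $T_{\sP,f}$, splitting $T_{\sP,f}$ into the pieces $T^{\pi}_f$, $H$, and a complement, and checking the asserted identity on each piece. First I would observe that both $\theta^{\sL_H}$ and $\theta^{\sL_{H'}}$ agree with $\theta$ on $\sD$, so their difference vanishes on $\sD_f$; since $\lambda$ also vanishes on $\sD_f$, the identity $-\lambda\cdot\vec{s}_{H,H'}=\theta^{\sL_H}-\theta^{\sL_{H'}}$ holds trivially there. Hence it remains to evaluate both sides on a single vector $\vec{z}\in T_{\sP,f}$ transverse to $\sD_f$, for instance a vector with $\lambda(\vec{z})=1$, and since both sides are linear in the argument this will finish the proof. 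The natural choice is to take $\vec z$ with ${\rm d}\pi(\vec z)\in(\sL_{H'})_x=\ell_H$; then $\theta^{\sL_{H'}}(\vec z)=0$ by the definition of $\theta^{\sL_{H'}}$, so the right-hand side at $\vec z$ equals $\theta^{\sL_H}(\vec z)$, and I must show this equals $\vec{s}_{H,H'}$.

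The key computation is therefore: for $\vec z\in T_{\sP,f}$ with ${\rm d}\pi(\vec z)\in\ell_H$ and $\lambda(\vec z)=1$, one has $\theta^{\sL_H}(\vec z)=\vec{s}_{H,H'}$. To see this I would use the characterization of $\vec{s}_{H,H'}$ via Definition \ref{d.prolong} (2): it is the unique vector with $\sigma(\vec{s}_{H,H'},u)=\frac{2}{\dim V}\tr\big((s_{H,H'})_u\big)$ for all $u\in V$. So it suffices to compute $\sigma\big(\theta^{\sL_H}(\vec z),u\big)$ and match it with $\frac{2}{\dim V}\tr\big((s_{H,H'})_u\big)$ for every $u\in V$. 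For the left side I would apply Lemma \ref{l.Pi} (ii) with the roles suitably arranged: $\sigma\big(\theta^{\sL_H}(\vec z),u\big)={\rm d}\lambda(u^H,\vec z)$ (up to sign, reading off the formula ${\rm d}\lambda(v^H,\vec w)=\sigma(\theta^{\sL_H}(\vec w),v)$ with $v=u$, $\vec w=\vec z$). Now I expand ${\rm d}\lambda(u^H,\vec z)$ using that $u^H-u^{H'}=\widetilde{s_{H,H'}(u)}_f$ is a fundamental vector field value: writing $u^H=u^{H'}+\widetilde{s_{H,H'}(u)}_f$, the term ${\rm d}\lambda(u^{H'},\vec z)$ vanishes because $\vec z\in{\rm Null}^{{\rm d}\lambda}_{H'}$ (as ${\rm d}\pi(\vec z)\in\ell_H$ — wait, here one needs $\ell_H$ versus $\ell_{H'}$, so I will instead choose $\vec z$ with ${\rm d}\pi(\vec z)\in\ell_{H'}$, symmetrically adjusting which form kills it), and the remaining term ${\rm d}\lambda(\widetilde{s_{H,H'}(u)}_f,\vec z)$ is computed by Lemma \ref{l.Sternberg}, which gives ${\rm d}\lambda(\widetilde a,\vec z)=-\frac{2}{\dim V}\tr(a)\,\lambda(\vec z)$ with $a=s_{H,H'}(u)$ and $\lambda(\vec z)=1$. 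Collecting signs, this produces exactly $\mp\frac{2}{\dim V}\tr\big((s_{H,H'})_u\big)=\mp\sigma(\vec{s}_{H,H'},u)$, which is what the definition of $\vec{s}_{H,H'}$ demands. One must be careful about which of $\ell_H$, $\ell_{H'}$ is used and hence which of $\theta^{\sL_H}$, $\theta^{\sL_{H'}}$ is killed by $\vec z$; by linearity it is enough to handle the two cases ${\rm d}\pi(\vec z)\in\ell_H$ and ${\rm d}\pi(\vec z)\in\ell_{H'}$ separately, or simply to note that the claimed identity $-\lambda\cdot\vec s_{H,H'}=\theta^{\sL_H}-\theta^{\sL_{H'}}$ is antisymmetric under swapping $H\leftrightarrow H'$ (since $s_{H',H}=-s_{H,H'}$, hence $\vec s_{H',H}=-\vec s_{H,H'}$) and treat one case, the other following formally.

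The ``consequently'' clause is then immediate: apply ${\rm d}$ to the 1-form identity $-\lambda\cdot\vec s_{H,H'}=\theta^{\sL_H}-\theta^{\sL_{H'}}$ and evaluate on $\vec u,\vec v\in\sD_f$. Since $\vec s_{H,H'}\in V$ is a constant vector, ${\rm d}(\lambda\cdot\vec s_{H,H'})=({\rm d}\lambda)\cdot\vec s_{H,H'}$, and evaluating on $\vec u,\vec v\in\sD_f$ gives $-{\rm d}\lambda(\vec u,\vec v)\,\vec s_{H,H'}={\rm d}\theta^{\sL_H}(\vec u,\vec v)-{\rm d}\theta^{\sL_{H'}}(\vec u,\vec v)$, as claimed. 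I expect the main obstacle to be purely bookkeeping: getting the signs right in the chain ``Lemma \ref{l.Pi} (ii) $\to$ split off the fundamental vector field $\to$ Lemma \ref{l.Sternberg} $\to$ definition of $\vec A$'', and making the reduction to a transverse vector $\vec z$ watertight by pinning down ${\rm d}\pi(\vec z)$ inside the correct null line $\ell_H$ or $\ell_{H'}$; there is no genuine analytic or geometric difficulty beyond that.
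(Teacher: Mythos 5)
Your argument is correct and follows essentially the same route as the paper: both reduce the vector identity to pairing with an arbitrary $v\in V$ via the nondegenerate form $\sigma$, convert $\sigma(\theta^{\sL_H}(\cdot),v)$ into ${\rm d}\lambda(v^H,\cdot)$ by Lemma \ref{l.Pi} (ii), split off the fundamental vector field $v^H-v^{H'}=\widetilde{s_{H,H'}(v)}_f$, and finish with Lemma \ref{l.Sternberg}, so your preliminary reduction to a single vector transverse to $\sD_f$ is only an organizational variant of the paper's computation for arbitrary $\vec{w}\in T_{\sP,f}$. (Two small points of bookkeeping: with $\lambda(\vec z)=1$ and ${\rm d}\pi(\vec z)\in\ell_{H'}$ the computation yields $\theta^{\sL_H}(\vec z)=-\vec{s}_{H,H'}$, not $+\vec{s}_{H,H'}$, which is what the stated identity requires and what your $\mp$ resolves to; and for the ``consequently'' clause one should differentiate not the pointwise identity at $f$ but the neighborhood identity $\theta^{\sL_H}-\theta^{\sL_{H'}}=h\,\lambda$ with $h(f)=-\vec{s}_{H,H'}$, after which restriction to $\wedge^2\sD_f$ kills the ${\rm d}h\wedge\lambda$ term exactly as in the proof of Lemma \ref{l.Pi} (i).)
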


\begin{proof}
From Definition \ref{d.prolong} (2),
it suffices to check
$$\sigma(\theta^{\sL_{H}}(\vec{w}) -\theta^{\sL_{H'}}(\vec{w}), v) =   - \frac{2}{\dim V} \tr (s_{H,H'}(v)) \ \lambda( \vec{w})$$ for any  $v \in V$ and $\vec{w} \in T_{\sP,f}.$
But Lemma \ref{l.Pi} (ii) implies  $$\sigma(\theta^{\sL_{H}}(\vec{w}) -\theta^{\sL_{H'}}(\vec{w}), v ) = {\rm d} \lambda ( v^{H} - v^{H'}, \vec{w}) = {\rm d} \lambda (\widetilde{s_{H,H'}(v)}, \vec{w}),$$ which is equal to
$  - \frac{2}{\dim V} \tr ( s_{H,H'}(v)) \ \lambda (\vec{w})$ by Lemma \ref{l.Sternberg}. This completes the proof. \end{proof}

\begin{lemma}\label{l.cformula}
 In Definition \ref{d.cs},
    we have $$\Pi_{H'} - \Pi_{H} = \delta s_{H, H'}$$ where $\delta$ is as in Definition \ref{d.prolong} (3). \end{lemma}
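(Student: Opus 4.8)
The plan is to pin everything down at the point $f$ in terms of the chosen complement $H$ and the tensor $s := s_{H,H'} \in \Hom(V,\fg)$, writing $\vec{s} := \vec{s}_{H,H'} \in V$ for the associated vector of Definition \ref{d.prolong}(2). The key input is Definition \ref{d.cs}: for each $u \in V$ we have $u^{H'} = u^H - \widetilde{s(u)}_f$ in $T_{\sP,f}$, where $\widetilde{s(u)}$ is the fundamental vector field generated by $s(u) \in \fg$. Since ${\rm d}\theta^{\sL_{H'}}$ is an honest $2$-form defined near $\pi^{-1}(x)$, I may evaluate it on these tangent vectors and expand by multilinearity at $f$:
$$\Pi_{H'}(u,v) = {\rm d}\theta^{\sL_{H'}}(u^{H'}, v^{H'}) = {\rm d}\theta^{\sL_{H'}}\bigl(u^H - \widetilde{s(u)}_f,\ v^H - \widetilde{s(v)}_f\bigr).$$

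First I would dispose of the terms involving a fundamental vector field. A fundamental vector field is $\pi$-vertical, so $\theta^{\sL_{H'}}(\widetilde{s(v)}_f) = 0$, whence Lemma \ref{l.Sternberg} gives ${\rm d}\theta^{\sL_{H'}}(\widetilde{s(u)}, \widetilde{s(v)}) = -\,s(u)\cdot\theta^{\sL_{H'}}(\widetilde{s(v)}) = 0$. For the two mixed terms, Lemma \ref{l.Sternberg} gives ${\rm d}\theta^{\sL_{H'}}(\widetilde{s(u)}, v^H) = -\,s_u\bigl(\theta^{\sL_{H'}}(v^H)\bigr)$; and since $v^H \in \sD_f$ with ${\rm d}\pi(v^H) = f(v)$, one has $\theta^{\sL_{H'}}(v^H) = f^{-1}({\rm d}\pi(v^H)) = v$, so that term equals $-\,s_u(v)$, and symmetrically ${\rm d}\theta^{\sL_{H'}}(u^H, \widetilde{s(v)}) = s_v(u)$. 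Collecting the four contributions (with the antisymmetrization signs tracked carefully),
$${\rm d}\theta^{\sL_{H'}}(u^{H'}, v^{H'}) = {\rm d}\theta^{\sL_{H'}}(u^H, v^H) + s_u(v) - s_v(u).$$

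It then remains to compare ${\rm d}\theta^{\sL_{H'}}(u^H, v^H)$ with $\Pi_H(u,v) = {\rm d}\theta^{\sL_H}(u^H, v^H)$. Since $u^H, v^H \in \sD_f$, Lemma \ref{l.vec} applies directly and yields
$${\rm d}\theta^{\sL_H}(u^H, v^H) - {\rm d}\theta^{\sL_{H'}}(u^H, v^H) = -\,{\rm d}\lambda(u^H, v^H)\,\vec{s},$$
while Lemma \ref{l.ii} evaluates ${\rm d}\lambda(u^H, v^H) = -\sigma(u,v)$. Substituting gives ${\rm d}\theta^{\sL_{H'}}(u^H, v^H) = \Pi_H(u,v) - \sigma(u,v)\,\vec{s}$, and combining with the previous display,
$$\Pi_{H'}(u,v) - \Pi_H(u,v) = s_u(v) - s_v(u) - \sigma(u,v)\,\vec{s} = \delta s_{H,H'}(u,v)$$
by Definition \ref{d.prolong}(3), which is the assertion.

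I do not expect a genuine obstacle here: the argument is a bookkeeping computation assembled from Lemmas \ref{l.Sternberg}, \ref{l.ii} and \ref{l.vec}. The only points deserving care are the legitimacy of expanding ${\rm d}\theta^{\sL_{H'}}$ multilinearly at $f$ (valid because it is a $2$-form, so the values on $u^{H'} = u^H - \widetilde{s(u)}_f$ are unambiguous), the identity $\theta^{\sL_{H'}}(v^H) = v$ for $v^H \in \sD_f$, and keeping the signs straight when antisymmetrizing the mixed terms. Thus the main difficulty is being careful rather than finding an idea.
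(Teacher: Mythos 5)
Your proof is correct and follows essentially the same route as the paper's: both arguments reduce $\Pi_{H'}-\Pi_H$ to the three inputs $u^{H'}-u^{H}=-\widetilde{s_{H,H'}(u)}_f$, Lemma \ref{l.Sternberg} for the vertical directions, and Lemmas \ref{l.ii} and \ref{l.vec} to convert $\theta^{\sL_{H'}}$ into $\theta^{\sL_H}$ and produce the $-\sigma(u,v)\,\vec{s}_{H,H'}$ term. The only difference is organizational (you expand fully by bilinearity first, the paper interleaves the steps), and your signs all check out.
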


\begin{proof}
For $u, v \in V$,
let
$\widetilde{s_u}$ (resp. $\widetilde{s_v}$) be the fundamental vector field on $\sP$
 corresponding to $s_u:= s_{H,H'}(u) \in \fg$ (resp. $s_v:= s_{H,H'}(v) \in \fg$). Then
 \begin{eqnarray*} \Pi_{H'}(u, v) - \Pi_H (u,v) & =& {\rm d}\theta^{\sL_{H'}}( u^{H'}, v^{H'}) - {\rm d} \theta^{\sL_H} (u^H, v^H) \\ & = &
 {\rm d} \theta^{\sL_{H'}} ( (u^{H'} - u^H), v^{H'}) + {\rm d} \theta^{\sL_{H'}}(u^H, v^{H'}) \\ & &  + {\rm d} \theta^{\sL_H} (u^H, (v^{H'} - v^H))  -{\rm d} \theta^{\sL_H}(u^H, v^{H'}) \\
 &=& -{\rm d} \theta^{\sL_{H'}} (\widetilde{s_u}, v^{H'}) - {\rm d} \theta^{\sL_H}(u^H, \widetilde{s_v}) \\ & & + {\rm d} \lambda (u^H, v^{H'}) \ \vec{s}_{H,H'} \\
 & = & s_{H, H'}(u) \cdot v - s_{H, H'}(v) \cdot u - \sigma( u, v) \ \vec{s}_{H,H'} \\
 & = & \delta s_{H, H'} (u, v), \end{eqnarray*}
where the third equality is from Lemma \ref{l.vec} and the fourth equality is from Lemma \ref{l.Sternberg} and
   Lemma \ref{l.ii}. This finishes the proof. \end{proof}

 \begin{proof}[Proof of Theorem \ref{t.csplit}]
 We use the terminology in Lemma \ref{l.cformula}.
Fix a subspace $W \subset \Hom(\wedge^2 V, V)$ complementary to ${\rm Im}(\delta)$ once and for all.
(This $W$ may not be stable under the natural $G$-action on $\Hom(\wedge^2 V, V)$.)

Let $f \in \sP$ be a given point. We claim that there exists a unique subspace $\sH_f
\subset \sD_f$ complementary to $T^{\pi}_f$ such that $\Pi_{\sH_f}
\in W$.

To see the existence, fix a complement $H \subset \sD_f$ to $T^{\pi}_f$. Then there exists an element $s \in \Hom(V, \fg)$ such that $ \Pi_H + \delta s \in W$. By Lemma \ref{l.cs}, we have $s=  s_{H, H'}$ for some $H' \subset \sD_f$ complementary to
$T^{\pi}_{f}$. Then Lemma \ref{l.cformula} gives
$ \Pi_{H'} = \Pi_H + \delta s_{H, H'}  \in W$. So we can put $\sH_{f} = H'$.

To see the uniqueness, suppose that $H, H' $ are two subspaces of $\sD_f$ complementary to $T^{\pi}_f$ satisfying $\Pi_H, \Pi_{H'} \in W$. Then
$\Pi_H - \Pi_{H'} \in W$, but $\Pi_H - \Pi_{H'} \in {\rm Im}(\delta)$ by Lemma \ref{l.cformula}. Thus we have $\Pi_H = \Pi_{H'}$ and $\delta s_{H,H'} =0$. By the assumption that $\fg$ has no nonzero contact prolongation, the homomorphism $\delta$ is  injective. Thus we have $s_{H, H'} =0$, which implies $H=H'$ by Lemma \ref{l.cs}.

By the claim, at each $f \in \sP$, we have a unique horizontal subspace
$\sH_{f} \subset \sD_f$. This defines a subbundle $\sH \subset \sD$ complementary to $T^{\pi}$. \end{proof}

\section{Contact prolongation and  Legendrian submanifolds}\label{s.1}

 Let $(V, \sigma)$ be a symplectic vector space.
 For a Legendrian submanifold $Z \subset \BP V,$ let  $G^Z \subset {\rm CSp}(V)$ be the subgroup in Definition \ref{d.Legendre}
 and let  $ \fg^Z \subset \fcsp(V)$ be its Lie algebra.
The main result of this section is the following.

\begin{theorem}\label{t.1}
A nondegenerate Legendrian submanifold $Z \subset \BP V$ is a subadjoint variety  if $\fg^Z \subset \fcsp(V)$ has a nonzero contact prolongation. \end{theorem}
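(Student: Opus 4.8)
The plan is to show that a nonzero contact prolongation of $\fg^Z$ forces $\fg^Z$, together with $\sigma$, to be the $0$-graded part of a simple graded Lie algebra carrying a contact grading, in which $Z$ is recovered as the associated subadjoint variety. First I would recast Definition \ref{d.prolong} in the language of Tanaka prolongations. Set $\fg_{-2}=\C$ and $\fg_{-1}=V$, with the bracket $\fg_{-1}\wedge\fg_{-1}\to\fg_{-2}$ equal to $\sigma$, so that $\fg_{-2}\oplus\fg_{-1}$ is the Heisenberg Lie algebra of $(V,\sigma)$; and set $\fg_0=\fg^Z$, which contains the grading element ${\rm Id}_V$ because $\C^{\times}\cdot{\rm Id}_V\subseteq G^Z$ ($\widehat Z$ being a cone). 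The trace normalisation defining $\vec A$ in Definition \ref{d.prolong}\,(2) is precisely the condition that $A$ extend to a degree-$1$ derivation of $\fg_{-2}\oplus\fg_{-1}$ into $\fg_{-1}$, and $\delta A=0$ is the Leibniz condition on $\fg_{-1}\wedge\fg_{-1}$; hence the space of contact prolongations of $\fg^Z$ is exactly the first Tanaka prolongation $\fg_1$ of $\fg_{-2}\oplus\fg_{-1}\oplus\fg_0$. Let $\fg=\bigoplus_{k\ge-2}\fg_k$ be the full Tanaka prolongation. Since $\fg_0\subseteq\fcsp(V)$ and Tanaka prolongation is monotone in the $0$-part, $\fg$ is a graded subalgebra of the Tanaka prolongation of $(\fg_{-2}\oplus\fg_{-1},\fcsp(V))$, which is the simple Lie algebra $\fsp(V\oplus\C^{2})$ with its contact grading; thus $\fg$ is finite-dimensional, $\fg_k=0$ for $|k|\ge 3$, and $\dim\fg_{\pm 2}\le 1$.

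Assume now $\fg_1\ne 0$. The crucial step is that $\fg$ is then a simple Lie algebra. Since the bracket $\sigma$ of the Heisenberg base is nondegenerate and $\fg_0$ contains the grading element, Tanaka's prolongation theorem gives that $\fg$ is semisimple as soon as $\fg_1\ne 0$; the contact grading then forces $\dim\fg_{\pm 2}=1$, and a nonzero semisimple graded Lie algebra with one-dimensional extreme components and $\fg_{-1}\ne 0$ has no proper graded ideal, so $\fg$ is simple. Therefore $\fg$ is a simple Lie algebra with a contact grading, $\fg_{-1}=V$, $\fg_0=\fg^Z$, the induced symplectic form on $\fg_{-1}$ is a scalar multiple of $\sigma$, and $\fg_{-1}$ is an irreducible $\fg_0$-module; in particular the connected subgroup $G_0\subseteq{\rm CSp}(V)$ with Lie algebra $\fg_0$ has a unique closed orbit $Z_0\subseteq\BP V$.

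It remains to identify $Z$ with $Z_0$. By Theorem 11 of \cite{LM} together with the structure of simple contact-graded Lie algebras, $Z_0$ is a subadjoint variety with $\fg^{Z_0}=\fg_0=\fg^Z$; in particular $Z_0$ is a nondegenerate Legendrian submanifold, so $\dim Z_0=\tfrac{1}{2}\dim V-1$, and $G^Z$, $G^{Z_0}$ have the same identity component $G_0$. Now $Z$, being a nonempty closed $G_0$-invariant subset of $\BP V$, contains the closed orbit $Z_0$; since $Z$ and $Z_0$ are both irreducible of dimension $\tfrac{1}{2}\dim V-1$, we conclude $Z=Z_0$, so $Z$ is a subadjoint variety.

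The main difficulty lies in the second paragraph: extracting, from the bare hypothesis $\fg_1\ne 0$, that the Tanaka prolongation is simple with a contact grading — this is where the prolongation theory of Tanaka, as developed by Morimoto, carries the weight. A secondary point is the input $\fg^{Z_0}=\fg_0$ used in the last step, i.e.\ that the conformal-linear symmetry algebra of a subadjoint variety coincides with its Levi factor (and the concomitant rigidity that a Legendrian submanifold with this symmetry is homogeneous), which is exactly the sort of statement supplied by \cite{LM} and replaces the lengthy case analysis of \cite{FH18}. Everything else is formal bookkeeping with the Heisenberg prolongation.
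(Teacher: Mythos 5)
Your proposal takes a genuinely different route from the paper --- the paper never forms the full Tanaka prolongation; instead it uses the prolongation $A$ directly to produce, at each point $v\in\widehat Z$ with $\sigma(\vec A,v)\neq 0$, a semisimple element of $\fg^Z$ acting as the identity on $T_{Z,z}$ (Proposition \ref{p.formula} and Proposition \ref{p.cpZ}), concludes that $Z$ is Euler-symmetric, and then identifies Euler-symmetric nondegenerate Legendrian submanifolds with subadjoint varieties via fundamental forms and Corollary 26 of \cite{LM}. Your reduction of Definition \ref{d.prolong} to the first Tanaka prolongation of the Heisenberg algebra with $\fg_0=\fg^Z$ is correct, and the final identification of $Z$ with the closed orbit would work \emph{if} the intermediate structure theory held. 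It does not.

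The gap is the assertion that, because $\sigma$ is nondegenerate and $\fg_0$ contains the grading element, $\fg_1\neq 0$ forces the full prolongation to be semisimple. There is no such theorem, and it is false. Take $W\subset V$ a Lagrangian subspace and let $\fg_0=\fp:=\{a\in\fcsp(V):a(W)\subseteq W\}$, which contains ${\rm Id}_V$. Inside the contact-graded $\fsp(V\oplus\C^2)$ one has $\fg_1(\fcsp(V))\cong V$, with $w\in V$ acting by $A^w_u(x)=\sigma(u,x)w+\sigma(w,x)u+\sigma(w,u)x$; one checks directly that $A^w_u\in\fp$ for all $u$ exactly when $w\in W$, so the first contact prolongation of $\fp$ is $W\neq 0$, while the second prolongation vanishes. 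The full Tanaka prolongation $\C\oplus V\oplus\fp\oplus W$ contains a Borel subalgebra of $\fsp(V\oplus\C^2)$ but not $\fg_2$, hence is a proper parabolic and is not semisimple. The structural reason your argument cannot close is that up to the semisimplicity claim you have used nothing about $Z$ beyond $\,{\rm Id}_V\in\fg^Z$; any correct proof must feed in the hypothesis that $\fg^Z$ preserves a nondegenerate Legendrian cone (the paper does so through the identity $A_v(v)=2\sigma(\vec A,v)v$, whose proof uses that $T_{\widehat Z,v}$ is $\sigma$-isotropic, and through the nondegeneracy of $Z$ to rule out $\vec A=0$ via Lemma 2.2.1 of \cite{HM05}). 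A secondary unproved input is the irreducibility of $V$ as a $\fg^Z$-module, which you invoke to get a unique closed orbit but which is again a consequence, not a premise, of $Z$ being subadjoint.
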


The converse of Theorem \ref{t.1} holds, although we do not need it. One can see it  from the fact that
$Z$-isotrivial Legendrian cone structures are parabolic contact structures and the latter has prolongations (Section 4.2 of \cite{CS}).
To prove Theorem \ref{t.1}, it is convenient to use the following notion from \cite{Euler}.

\begin{definition}\label{d.Euler} Let $V$ be a vector space.
A subvariety $Z \subset \BP V$ is {\em Euler-symmetric} if for a general point $z \in Z$, there exists a $\C^{\times}$-subgroup
 $E_z \subset {\rm GL}(V)$ such that $E_z$ preserves $Z$ with an isolated fixed point at $z$ and acts trivially on $\BP T_{Z,z}$. \end{definition}

The following result is Proposition 2.7 in \cite{Euler}. We refer the reader to \cite{Euler} for the definition of fundamental forms of projective varieties.

\begin{proposition}\label{p.Euler}
Let $Z_1$ and $Z_2$ be two Euler-symmetric subvarieties of $\BP V$ of equal dimension. Let $z_1 \in Z_1$ and $z_2 \in Z_2$ be general points. If the systems of fundamental forms  of $Z_1$ at $z_1$ and   $Z_2$ at $z_2$ are isomorphic, then $Z_1$ and $Z_2$ are isomorphic by a projective transformation of $\BP V$. \end{proposition}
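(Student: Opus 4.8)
The plan is to recognize the contact prolongation as the degree-one part of a graded Lie algebra built on the contact (Heisenberg) data, and then to identify $Z$ with the closed orbit of the resulting algebra by an Euler-symmetry argument. First I would assemble the graded Lie algebra. Set $\fg_{-2} = \C \cdot \xi$, $\fg_{-1} = V$ with the Heisenberg bracket $[u,v] = \sigma(u,v)\,\xi$, and $\fg_0 = \fg^Z$ acting on $\fg_{-1}\oplus\fg_{-2}$ through $\fcsp(V)$. A direct check shows that the brackets $[A,u] := A_u \in \fg_0$ and $[A,\xi] := \vec{A} \in \fg_{-1}$ (for $u \in \fg_{-1}$) satisfy the Jacobi identity precisely when $\delta A = 0$; thus the space of contact prolongations is exactly the degree-one prolongation $\fg_1$ of $\fg_0$, and the hypothesis says $\fg_1 \neq 0$. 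Continuing the Tanaka prolongation produces $\fg = \bigoplus_k \fg_k$, and the scaling element $-{\rm Id}_V$ lies in $\fg^Z = \fg_0$ (the cone $\widehat{Z}$ is scale-invariant) and acts on $\fg_k$ by $k$, so it is the grading element.

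Next I would establish finiteness and simplicity. Using the nondegeneracy of $\sigma$ together with the Lagrangian-cone condition one obtains an invariant pairing forcing a symmetry $\fg_k \cong \fg_{-k}$; in particular $\fg_1 \neq 0$ gives $\fg_2 \neq 0$, the prolongation terminates at degree $2$, and $\fg = \fg_{-2}\oplus\fg_{-1}\oplus\fg_0\oplus\fg_1\oplus\fg_2$ is a finite-dimensional contact-graded Lie algebra. I would then argue that $\fg$ is simple: a nonzero graded ideal must meet $\fg_{-2} = \C\cdot\xi$, since $\fg_{-1}$ generates the negative part under a nondegenerate bracket, and from $\fg_{-2}$ the ideal spreads to all of $\fg$ through the relations $[\fg_2,\fg_{-2}]\neq 0$ and so on. It is at this step that the description of nondegenerate Legendrian varieties via their cubic form in \cite{LM} is meant to shorten the argument in comparison with \cite{FH18}.

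A simple Lie algebra with a contact grading has as its highest-weight orbit a subadjoint variety $Z_0 \subset \BP\fg_{-1} = \BP V$, Legendrian for $\sigma$, homogeneous under $G^{Z_0}$ and Euler-symmetric. It remains to show $Z = Z_0$. Since $Z$ is $\fg_0 = \fg^Z$-invariant and the $\fsl_2$-triple through a general point $z$ (with lowest-weight vector $\widehat{z} \in \fg_{-1}$) furnishes a semisimple grading element $h_z \in \fg^Z$, the one-parameter group $\exp(t\,h_z)$ exhibits $Z$ as Euler-symmetric in the sense of Definition \ref{d.Euler}. The system of fundamental forms of a nondegenerate Legendrian variety is governed by its cubic form \cite{LM}, and the ambient $\fg$-structure forces the fundamental forms of $Z$ at $z$ to coincide with those of $Z_0$ at a general point. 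Proposition \ref{p.Euler} then yields a projective transformation of $\BP V$ carrying $Z$ to $Z_0$; because a projective isomorphism of nondegenerate Legendrian varieties is realized by an element of ${\rm CSp}(V)$ (the symplectic form being recovered from the embedding up to scale, see \cite{LM}), $Z$ is ${\rm CSp}(V)$-equivalent to the subadjoint variety $Z_0$ and hence is itself subadjoint.

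The main obstacle I expect is twofold: proving that the contact prolongation closes up into a finite-dimensional simple Lie algebra (the termination and simplicity of the second step), and verifying that the fundamental forms of $Z$ match those of the model $Z_0$ so that Proposition \ref{p.Euler} applies. Both are precisely the points where the cubic-form description of \cite{LM} is intended to replace the longer computations of \cite{FH18}.
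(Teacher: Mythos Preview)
Your proposal does not address the stated proposition at all. Proposition~\ref{p.Euler} is a statement about \emph{general} Euler-symmetric projective varieties---nothing about contact structures, Legendrian cones, or the Lie algebra $\fg^Z$ appears in its hypotheses or conclusion---and in the paper it is not proved but simply quoted as Proposition~2.7 of \cite{Euler}. What you have written is instead an attempted proof of Theorem~\ref{t.1} (the characterization of subadjoint varieties via nonvanishing contact prolongation), and indeed your argument \emph{invokes} Proposition~\ref{p.Euler} as a tool in its final step. A proof that uses the very statement it is meant to establish cannot be a proof of that statement.

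Even regarded as a proof sketch for Theorem~\ref{t.1}, your route diverges substantially from the paper's and carries real gaps. The paper argues directly: from a nonzero contact prolongation $A$ it derives the explicit formulas $A_v(v)=2\sigma(\vec{A},v)v$ and $A_v(w)=\sigma(\vec{A},v)w+\sigma(\vec{A},w)v$ on $\widehat{Z}$ (Proposition~\ref{p.formula}), uses them to exhibit the semisimple part of $A_v$ as an Euler $\C^\times$-action at a general point (Proposition~\ref{p.cpZ}), and then applies Proposition~\ref{p.Euler} together with the Landsberg--Manivel description of Legendrian fundamental forms to conclude $Z$ is subadjoint (Proposition~\ref{p.Z_P}). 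Your approach instead tries to assemble the full Tanaka prolongation $\fg=\bigoplus_k\fg_k$, prove it is finite-dimensional and simple, and read $Z$ off as the highest-weight orbit. The termination step (why does the prolongation stop at degree $2$?), the claimed duality $\fg_k\cong\fg_{-k}$, and the simplicity argument are asserted rather than proved, and the final step---that the fundamental forms of $Z$ and of the model $Z_0$ agree---is exactly the nontrivial content that the paper handles via the cubic form of \cite{LM}, not something that drops out of ``the ambient $\fg$-structure''. So the proposal both targets the wrong statement and, as a proof of the intended theorem, leaves the hardest steps as assertions.
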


To apply this to Legendrian submanifolds, we recall some basic properties of fundamental forms of Legendrian submanifolds.
The following is proved in pp. 348-349 of \cite{LM}.
Recall that for a cubic form $P \in \Sym^3 W^{\vee}$ on a vector space $W$, its Hessian is
the homomorphism $\Sym^2 W \to W^{\vee}$ defined by $$w_1 \odot w_2 \in \Sym^2 W \ \mapsto \ P(w_1, w_2, \cdot) \in W^{\vee}.$$
We say that a cubic form $P$ is nondegenerate if its Hessian is surjective to $W^{\vee}$.

\begin{proposition}\label{p.III} Let $(V, \sigma)$ be a symplectic vector space.
for a point $z \in \BP V$, denote by $\widehat{z} \subset V$ the corresponding 1-dimensional subspace and by   $z^{\perp \sigma} \subset V$ the subspace of codimension 1 defined by $\sigma(\widehat{z}, \cdot) =0$.
Let $Z \subset \BP V$ be a Legendrian submanifold.
\begin{itemize} \item[(1)]
If $Z$ is not a linear subspace, then $Z$ is nondegenerate, i.e, the affine cone $\widehat{Z}$ spans $V$.
\item[(2)] If $Z$ is nondegenerate and $z \in Z$ is a general point, then the image of the second fundamental form
$${\rm II}_{Z,z}:  \Sym^2 T_{Z,z} \to N_{Z,z} = \Hom(\widehat{z}, V/T_{\widehat{Z}, z})$$ is
$\Hom(\widehat{z}, z^{\perp \sigma}/T_{\widehat{Z},z})$.
\item[(3)] In (2), the third fundamental form ${\rm III}_{Z,z}$ is given by a nondegenerate cubic form on $T_{Z,z}$
and
$${\rm II}_{Z,z}:  \Sym^2 T_{Z,z} \to \Hom(\widehat{z}, z^{\perp \sigma}/T_{\widehat{Z},z}) \cong (\widehat{z}^{\vee} )^{\otimes 2} \otimes T^{\vee}_{Z,z}$$ is isomorphic to the Hessian of this cubic form. In particular, the second fundamental form is determined by the third fundamental form. \end{itemize} \end{proposition}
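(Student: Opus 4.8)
The plan is to reduce all three statements to one computation with the second fundamental form of the affine cone $\widehat{Z}\subset V$, exploiting that isotropy forces the $\sigma$-contraction of the second fundamental form to be totally symmetric. Fix a smooth point $z=[e_0]\in Z$ and set $W:=T_{\widehat{Z},z}\subset V$. Since $\widehat{Z}$ is isotropic of dimension $\tfrac12\dim V$, the subspace $W$ is Lagrangian; in particular $e_0\in W$, $W\subset z^{\perp\sigma}$, and $\sigma$ induces an isomorphism $V/W\cong W^{\vee}$ under which $z^{\perp\sigma}/W$ corresponds to $(W/\widehat{z})^{\vee}$, matching (up to the line twists displayed in (3)) the space $\Hom(\widehat{z},z^{\perp\sigma}/W)$. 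Throughout I will use the standard facts from projective differential geometry that for a cone the radial (Euler) direction lies in the kernel of the affine second fundamental form, and that the projective forms ${\rm II}_{Z,z},{\rm III}_{Z,z}$ are the reductions of the affine ones modulo this radial direction.

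For (2) and (3), let $u,v,w$ be local vector fields tangent to $\widehat{Z}$ near $e_0$. Differentiating the isotropy identity $\sigma(u,v)\equiv 0$ in the direction $w$, using the flat connection of $V$, gives $\sigma(D_w u,v)+\sigma(u,D_w v)=0$; since $v,w\in W$ and $\sigma|_W=0$ the tangential parts drop out, and this reads $\sigma({\rm II}(w,u),v)=\sigma({\rm II}(w,v),u)$, where ${\rm II}$ is the affine second fundamental form valued in $V/W$ (note $\sigma(\cdot,w)$ descends to $V/W$ because $w\in W=W^{\perp\sigma}$). Together with the symmetry of ${\rm II}$ in its arguments, this shows that $T(u,v,w):=\sigma({\rm II}(u,v),w)$ is a totally symmetric cubic tensor on $W$. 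Because the radial direction $e_0$ lies in $\ker{\rm II}$, we get $T(e_0,\cdot,\cdot)=0$, so $T$ descends to a cubic form $\bar C\in\Sym^3 T_{Z,z}^{\vee}$ and ${\rm II}$ descends to the projective second fundamental form of $Z$. The vanishing $T(\cdot,\cdot,e_0)=0$ translates, via $V/W\cong W^{\vee}$, into the inclusion ${\rm Im}({\rm II}_{Z,z})\subseteq z^{\perp\sigma}/W$, and under the identification $z^{\perp\sigma}/W\cong T_{Z,z}^{\vee}$ the map ${\rm II}_{Z,z}\colon\Sym^2 T_{Z,z}\to T_{Z,z}^{\vee}$ becomes $u\odot v\mapsto\bar C(u,v,\cdot)$, i.e. the Hessian of $\bar C$. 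Since ${\rm Im}({\rm II}_{Z,z})$ then has codimension at most one in $V/W$, the second normal space is at most one-dimensional and ${\rm III}_{Z,z}$ is carried by the cubic $\bar C$; this establishes the structural assertions of (2) and (3), leaving only the nondegeneracy of $\bar C$ at a general $z$.

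That nondegeneracy is the crux, and the step I expect to require the most care. I would argue by contradiction: if the Hessian of $\bar C$ failed to be surjective on a neighborhood of a general point, there would be a nonzero field of directions $v_0\in\ker{\rm II}_{Z,z}$. By the standard theory of the second fundamental form, such a persistent kernel direction is tangent to a line in $Z$ along which the embedded tangent space $W=T_{\widehat{Z}}$ is constant; equivalently the Lagrangian Gauss map $z\mapsto T_{\widehat{Z},z}\in\Lag(V)$, whose differential is essentially $v\mapsto\bar C(v,\cdot,\cdot)$, degenerates. Integrating the kernel distribution then exhibits $\widehat{Z}$ as swept out by the fixed tangent spaces along these lines, forcing $\widehat{Z}$ into a proper subspace and contradicting nondegeneracy (unless $Z$ is linear, excluded here). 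This is precisely the content extracted from pp.~348--349 of \cite{LM}.

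For (1), assume $\widehat{Z}$ spans a proper subspace $V_1\subsetneq V$. Since each $T_{\widehat{Z},v}$ is Lagrangian, hence $\sigma$-self-perpendicular, and $V_1$ is the span of these tangent spaces, we obtain $V_1^{\perp\sigma}=\bigcap_v T_{\widehat{Z},v}\subseteq V_1$; thus $V_1$ is coisotropic and $K:=V_1^{\perp\sigma}$ is a nonzero subspace contained in every tangent space $T_{\widehat{Z},v}$. A variety all of whose tangent spaces contain a fixed $K$ is invariant under translation by $K$, so $\widehat{Z}=\widehat{Z}+K$; combined with the cone property this makes $\widehat{Z}$ a cone with vertex containing $K$, i.e. $Z\subset\BP V$ is a projective cone with nonempty vertex $\BP K$. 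A smooth projective cone is a linear subspace, so $Z$ is linear, which proves (1) and justifies assuming nondegeneracy in (2) and (3).
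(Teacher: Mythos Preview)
The paper does not prove this proposition itself; it simply cites pp.~348--349 of \cite{LM}. Your outline is essentially a reconstruction of that argument, and the core computation---differentiating the isotropy relation to obtain the totally symmetric cubic $T(u,v,w)=\sigma({\rm II}(u,v),w)$, then passing to the quotient by the Euler direction---is correct and is exactly how Landsberg--Manivel proceed. Your proof of (1) via the coisotropic span $V_1$ and the common subspace $K=V_1^{\perp\sigma}$ of all tangent spaces is also clean and correct.

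Two points deserve tightening. First, you assert that ``${\rm III}_{Z,z}$ is carried by the cubic $\bar C$'' merely because the second normal space is one-dimensional, but this only says ${\rm III}$ is \emph{some} cubic; to identify it with $\bar C$ you must differentiate once more. Concretely, from $\sigma(\phi_{ij},\phi)=0$ (which you have already used to get ${\rm Im}({\rm II})\subset z^{\perp\sigma}$) one obtains $\sigma(\phi_{ijk},\phi)=-\sigma(\phi_{ij},\phi_k)=-T(i,j,k)$, and the left side is precisely ${\rm III}(i,j,k)$ under the identification $V/z^{\perp\sigma}\cong\widehat z^{\vee}$ via $\sigma$. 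Second, your nondegeneracy argument is phrased imprecisely: a persistent kernel of ${\rm II}$ (equivalently, a degenerate Gauss map) does not by itself ``force $\widehat Z$ into a proper subspace''. The correct mechanism is that a \emph{smooth} projective variety with degenerate Gauss map is a linear space (the classical result that Gauss fibers are linear, combined with smoothness, or Zak's theorem on tangencies); a linear Legendrian submanifold is then automatically degenerate, contradicting the hypothesis. Your parenthetical ``unless $Z$ is linear'' has the logic inverted---linearity is the \emph{conclusion} of the Gauss-map argument, not an exception to it.
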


The following is Theorem 2 of \cite{Bu07}. As the proof in \cite{Bu07} is somewhat complicated,
we will give a simple proof using Proposition \ref{p.III}.

\begin{proposition}\label{p.CSp} Let $(V, \sigma)$ be a symplectic vector space.
Let $Z \subset \BP V$ be a nondegenerate Legendrian submanifold. Then
an element $A \in {\rm GL}(V)$ preserving $\widehat{Z} \subset V$ belongs to ${\rm CSp}(V)$. \end{proposition}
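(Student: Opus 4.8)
The plan is to show that an element $A \in \GL(V)$ preserving $\widehat{Z} \subset V$ respects the symplectic form $\sigma$ up to scalar, by exploiting the rigidity of the projective-differential invariants of $Z$ described in Proposition \ref{p.III}. First I would note that $A$ induces a projective automorphism $[A]$ of $\BP V$ preserving $Z$, hence it carries a general point $z \in Z$ to another general point $z' = [A](z) \in Z$, and its differential identifies the whole system of fundamental forms of $Z$ at $z$ with that at $z'$; in particular $[A]$ maps the third fundamental form ${\rm III}_{Z,z}$ (a nondegenerate cubic on $T_{Z,z}$, by Proposition \ref{p.III}(3)) to ${\rm III}_{Z,z'}$, and likewise for the second fundamental form.

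The key step is then to recover $\sigma$ from this data. By Proposition \ref{p.III}(2)--(3), the image of ${\rm II}_{Z,z}$ inside $N_{Z,z} = \Hom(\widehat{z}, V/T_{\widehat{Z},z})$ is precisely $\Hom(\widehat{z}, z^{\perp\sigma}/T_{\widehat{Z},z})$, so the hyperplane $z^{\perp\sigma} \subset V$ is determined by the (second, equivalently third) fundamental form at $z$ together with the flag $\widehat{z} \subset T_{\widehat{Z},z} \subset V$. Since $A$ preserves all of this intrinsic data, it must send $z^{\perp\sigma}$ to $(z')^{\perp\sigma}$, i.e. $A(z^{\perp \sigma}) = ([A]z)^{\perp\sigma}$ for every general $z \in Z$, hence — since $Z$ is nondegenerate, so such hyperplanes and their defining functionals $\sigma(v,\cdot)$ span $V^\vee$ as $v$ ranges over $\widehat{Z}$ — the transpose $A^\vee$ carries the linear span of $\{\sigma(v,\cdot) : v \in \widehat Z\}$ to itself compatibly with the map $v \mapsto \sigma(v,\cdot)$. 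Concretely, this says there is a function $c: \widehat{Z} \to \C^\times$ with $\sigma(Av, \cdot) = c(v)\,\sigma(v, A^{-1}\cdot)$ as functionals, i.e. $\sigma(Av, Aw) = c(v)\,\sigma(v,w)$ for all $v \in \widehat Z$ and all $w \in V$; by symmetry/antisymmetry of $\sigma$ and bilinearity, $c$ must be constant on $\widehat Z$, and since $\widehat Z$ spans $V$, the identity $\sigma(Av,Aw) = c\,\sigma(v,w)$ extends to all $v, w \in V$. Thus $A \in \CSp(V)$.

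There is a small degenerate case to dispose of separately: if $Z$ is a linear subspace, Proposition \ref{p.III}(1) shows it cannot be nondegenerate unless $\BP V$ is a point, so the hypothesis of the proposition already excludes it; the argument above uses ${\rm II}_{Z,z} \ne 0$, which holds for nondegenerate $Z$ of positive dimension precisely because its image is the nonzero space $\Hom(\widehat z, z^{\perp\sigma}/T_{\widehat Z,z})$. I expect the main obstacle to be the passage from "$A$ preserves the hyperplanes $z^{\perp\sigma}$ for general $z$" to "$A$ scales $\sigma$": one must argue that the pointwise scalars $c(v)$ glue into a single global scalar. The cleanest way is to observe that $\sigma(A\,\cdot, A\,\cdot)$ and $\sigma(\cdot,\cdot)$ are two symplectic forms on $V$ (both nondegenerate, the former since $A$ is invertible) which agree up to scalar on the spanning set $\widehat Z$ in each variable separately; a bilinear-form argument (fix $w$, vary $v \in \widehat Z$: the ratio is the ratio of two elements of $V^\vee$, hence locally constant on the irreducible cone $\widehat Z$, hence constant) then forces proportionality globally. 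This is elementary once the identification of $z^{\perp\sigma}$ via fundamental forms is in hand, which is the genuinely geometric input and is exactly what Proposition \ref{p.III} supplies.
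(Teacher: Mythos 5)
Your proof is correct and rests on exactly the same geometric input as the paper's: Proposition \ref{p.III}(2) identifies the hyperplane $z^{\perp\sigma}$ intrinsically from the image of the second fundamental form, so any linear automorphism preserving $\widehat{Z}$ must carry $z^{\perp\sigma}$ to $([A]z)^{\perp\sigma}$. The only difference is the final bookkeeping --- the paper packages this as ``any two symplectic forms making $Z$ Legendrian are proportional'' and derives a contradiction from a degenerate member of the pencil, whereas you glue the pointwise scalars $c(v)$ directly via antisymmetry of $\sigma$ and the spanning of $V$ by $\widehat{Z}$ (your antisymmetry argument is the one that works; the side remark that the ratio is ``locally constant'' on $\widehat{Z}$ is not justified as stated, but it is not needed).
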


\begin{proof}
To prove the proposition, it suffices to show that if $\sigma'$ is a symplectic form on $V$ with respect to which $Z$ is Legendrian,
then $\sigma' \in \C \cdot \sigma$.

Suppose that $\sigma' \not\in \C \sigma.$ Then there exists a linear combination
$\tau := \sigma' + c \cdot \sigma$ for some $c \in \C$ such that its null space
$${\rm Null}_{\tau} := \{ v \in V, \ \tau(v, u) = 0 \mbox{ for all } u \in V\}$$ is nonzero.
By Proposition \ref{p.III} (2), for a general $z \in Z$, we have
$z^{\perp \sigma} = z^{\perp \sigma'}. $
Thus ${\rm Null}_{\tau} \subset z^{\perp \sigma} = z^{\perp \sigma'}$ for all $z \in Z$.
This implies that the isomorphism $\iota_{\sigma}: \BP V \to \BP V^{\vee}$ given by $z \mapsto \BP z^{\perp \sigma}$
sends $Z$ into the linear subspace annihilating ${\rm Null}_{\tau}$.  This is a contradiction to the assumption that
 $Z$ is nondegenerate.  \end{proof}

Proposition \ref{p.CSp} says that when we apply results like Proposition \ref{p.Euler} to nondegenerate Legendrian submanifolds, we do not need to check whether  isomorphisms as projective subvarieties  respect the underlying symplectic form on the vector space: they always do.

\begin{proposition}\label{p.Z_P}
For a symplectic vector space $(V, \sigma),$  if a nondegenerate Legendrian submanifold $Z \subset \BP V$ is Euler-symmetric,
then it is a subadjoint variety. \end{proposition}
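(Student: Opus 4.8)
The plan is to prove that a nondegenerate Euler-symmetric Legendrian submanifold $Z \subset \BP V$ is automatically homogeneous under its projective automorphism group, and then invoke Proposition \ref{p.CSp} to upgrade this to an action of a subgroup of ${\rm CSp}(V)$, which is exactly the defining property of a subadjoint variety in Definition \ref{d.Legendre}(4). The homogeneity will be extracted by comparing $Z$ to a known subadjoint variety via the rigidity statement in Proposition \ref{p.Euler}, for which the key input is the structure of the fundamental forms of Legendrian submanifolds recorded in Proposition \ref{p.III}.

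First I would recall that subadjoint varieties are themselves Euler-symmetric: each is a homogeneous projective variety $G/P$ with $P$ parabolic, and the grading element of the contact (or rather $\Z$-graded) Lie algebra gives the required $\C^\times$-subgroup $E_z \subset {\rm GL}(V)$ fixing a general point $z$ and acting trivially on $\BP T_{Z,z}$; this is standard and presumably established (or cited) in \cite{Euler} or \cite{LM}. So if I can match the system of fundamental forms of $Z$ at a general point with that of a subadjoint variety $Z_0 \subset \BP V_0$ of the same dimension, Proposition \ref{p.Euler} will give a projective isomorphism $\BP V \cong \BP V_0$ carrying $Z$ to $Z_0$, whence $Z$ is homogeneous and, by Proposition \ref{p.CSp}, its automorphism group $G^Z$ lies in ${\rm CSp}(V)$ and acts transitively. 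The point of Proposition \ref{p.III}(3) is that for a nondegenerate Legendrian submanifold the entire system of fundamental forms at a general point is governed by a single nondegenerate cubic form $P_z = {\rm III}_{Z,z}$ on $T_{Z,z}$: the second fundamental form is the Hessian of $P_z$, and one expects the higher fundamental forms to vanish (as they do for subadjoint varieties, whose cubic symbol already determines everything).

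So the technical heart of the argument is: (i) show that for a general $z \in Z$ the fundamental forms of order $\geq 4$ vanish, so that the full system of fundamental forms is determined by the cubic $P_z$; and (ii) find a subadjoint variety whose cubic symbol matches $P_z$. For (ii), the classification of subadjoint varieties (the Freudenthal / Legendrian series, realized via simple Jordan algebras — see \cite{LM}) shows that every nondegenerate cubic form arising as a ${\rm III}_{Z,z}$ can be realized; here one should use that Euler-symmetry constrains $P_z$ rigidly. Actually the cleanest route may be to use Euler-symmetry directly: the $\C^\times$-action $E_z$ linearizes $Z$ near $z$ and forces the local defining equations to be quasi-homogeneous, which combined with the Legendrian condition pins down the Taylor expansion of $Z$ at $z$ to be exactly that of the subadjoint variety with the same cubic symbol — this is essentially the mechanism behind Proposition \ref{p.Euler} in \cite{Euler}. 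I would therefore organize step (i) as: use the $E_z$-action to bound the order of the fundamental forms, then use Proposition \ref{p.III}(2)--(3) to identify ${\rm II}$ and ${\rm III}$ with Hessian and cubic, and conclude that a Legendrian submanifold with these invariants is projectively rigid.

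The main obstacle I anticipate is step (i), the vanishing of the higher fundamental forms (equivalently: that the cubic symbol $P_z$ already determines the projective germ of $Z$ at $z$). One has to rule out "extra" deformations of the germ compatible with both the Legendrian condition and Euler-symmetry. I expect this to follow from a dimension count on the space of quasi-homogeneous isotropic germs with prescribed cubic symbol, combined with the fact — which I would extract from \cite{LM}, perhaps via the Jordan-algebraic model — that the prolongation/symbol algebra of the subadjoint germ is as large as possible, leaving no room for further moduli. Once (i) is in hand, steps (ii) and the final invocation of Propositions \ref{p.Euler} and \ref{p.CSp} are formal, and the proposition follows.
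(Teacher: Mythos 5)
Your overall strategy (match fundamental forms, apply Proposition \ref{p.Euler}, finish with Proposition \ref{p.CSp}) is the right skeleton, and your worry about step (i) is actually unfounded: for a Legendrian $Z^n \subset \BP^{2n+1}$ the normal space at a general point has dimension $n+1$, the second fundamental form fills an $n$-dimensional piece and the third fills the remaining line, so there are no higher fundamental forms and Proposition \ref{p.III}(3) already says the whole system is the Hessian-plus-cubic package determined by ${\rm III}_{Z,z}$. No further vanishing argument or dimension count on quasi-homogeneous germs is needed.

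The genuine gap is your step (ii). You propose to ``find a subadjoint variety whose cubic symbol matches $P_z$,'' asserting that every nondegenerate cubic arising as a third fundamental form can be so realized and that ``Euler-symmetry constrains $P_z$ rigidly.'' Neither is true: subadjoint varieties correspond only to the very special cubics coming from simple Jordan algebras, whereas a priori $P_z$ could be any nondegenerate cubic, and Euler-symmetry places no constraint on $P_z$ at all --- Landsberg--Manivel construct, for \emph{every} nondegenerate cubic $P$ in $n$ variables, an Euler-symmetric Legendrian subvariety $\widetilde{Z}_P \subset \BP^{2n+1}$ with third fundamental form $P$ (p.~357 of \cite{LM}). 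So your step (ii) is exactly the statement to be proved, and your proposed mechanism is circular. The paper's route is to compare $Z$ not with a subadjoint variety but with $\widetilde{Z}_P$ for $P = {\rm III}_{Z,z}$: both are Euler-symmetric with the same fundamental forms, so Proposition \ref{p.Euler} gives $Z \cong \widetilde{Z}_P$ projectively; the constraint on $P$ then comes from the \emph{smoothness} of $Z$ (hence of $\widetilde{Z}_P$, which is generally singular), and Corollary 26 of \cite{LM} says a smooth $\widetilde{Z}_P$ is a subadjoint variety. Without that detour through the $\widetilde{Z}_P$ model and the smoothness input, your argument does not close.
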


\begin{proof}
When $\dim V = 2n+2$,
for each  nondegenerate cubic polynomial $P$ in $n$ variables,
a special Legendrian subvariety $\widetilde{Z}_P \subset \BP^{2n+1}$ whose third fundamental form at a general point is isomorphic to $P$ is constructed in p. 357 of \cite{LM}.  Their construction implies that $\widetilde{Z}_P$ is Euler-symmetric. (In fact, this is a special case of a more general construction of Euler symmetric varieties in Definition 3.6  of \cite{Euler}, as mentioned in Example 3.10 of \cite{Euler}.)
By Proposition \ref{p.III} (3), the system of fundamental forms of a nondegenerate Legendrian submanifold is determined by its third fundamental form. Thus by Proposition \ref{p.Euler}, an Euler-symmetric Legendrian submanifold $Z \subset \BP V$ is projectively isomorphic to $\widetilde{Z}_P \subset \BP^{2n+1}$. Then by Corollary 26 of \cite{LM}, it is a subadjoint variety. \end{proof}

\begin{proposition}\label{p.formula}
Let $Z \subset \BP V$ be a Legendrian submanifold and let $\fg^Z \subset \fcsp(V)$ be as before.
Let
$A$ be a contact prolongation of  $\fg^Z$ with the corresponding vector $\vec{A} \in V$ as in Definition \ref{d.prolong} (2). Then for  a nonzero point $ v \in
\widehat{Z}$ and $w \in T_{\widehat{Z}, v}$, \begin{itemize}
\item[(i)] $A_{v}(v) = 2\sigma(\vec{A}, v) v$;
and \item[(ii)] $ A_{v}(w)  = \sigma(\vec{A}, v) w +
\sigma(\vec{A}, w) v.$ \end{itemize} \end{proposition}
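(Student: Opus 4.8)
The plan is to exploit the defining equation $\delta A = 0$ of a contact prolongation by feeding it vectors in the affine cone $\widehat Z$, and to use the fact that $\fg^Z$ contains all infinitesimal automorphisms of $\widehat Z$ — in particular, since $A_u \in \fg^Z$ for every $u \in V$, each $A_u$ is tangent to $\widehat Z$, i.e. $A_u(v) \in T_{\widehat Z, v}$ whenever $v \in \widehat Z$ is a nonsingular point. First I would recall that $\widehat Z$ is isotropic: $\sigma(T_{\widehat Z, v}, T_{\widehat Z, v}) = 0$, and that by Proposition \ref{p.III}(1) (or the hypothesis of nondegeneracy) $T_{\widehat Z, v}$ is a Lagrangian subspace of $V$ containing $\widehat v = \C v$. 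The vanishing $\delta A(u,v) = A_u(v) - A_v(u) - \sigma(u,v)\vec A = 0$ is the single algebraic identity available; the whole proof is a matter of choosing $u, v$ cleverly among cone directions and tangent directions.

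For part (i): set $u = v \in \widehat Z$ in $\delta A(v, v) = 0$. The left side is $A_v(v) - A_v(v) - \sigma(v,v)\vec A = 0$, which is vacuous, so this direct substitution says nothing — the real input must come from differentiating or from pairing with $\sigma$. Instead I would argue as follows: since $A_v \in \fg^Z \subset \fcsp(V)$, we have $A_v(v) \in T_{\widehat Z, v}$, and because $\widehat Z$ is a cone, the Euler vector field shows $v \in T_{\widehat Z, v}$; so $A_v(v) = \alpha(v)\, v + (\text{something in } T_{\widehat Z,v} \text{ transverse to } v)$ is not yet forced to be a multiple of $v$. To kill the transverse part, take $w \in T_{\widehat Z, v}$ arbitrary and apply $\delta A$ to the pair $(v, w)$: $A_v(w) - A_w(v) - \sigma(v,w)\vec A = 0$. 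Since $w \in T_{\widehat Z, v}$, there is a curve $v(t) \in \widehat Z$ with $v(0) = v$, $\dot v(0) = w$; differentiating the relation $A_{v(t)}(v(t)) \in T_{\widehat Z, v(t)}$ and $A_{v(t)}(v(t)) = $ "conformal-symplectic action" will tie the scalar $\alpha$ to $\sigma(\vec A, v)$. More cleanly: use that $A_u$ is conformal symplectic, so $\sigma(A_u(v), v) + \sigma(v, A_u(v)) = \tfrac{2}{\dim V}\tr(A_u)\sigma(v,v) = 0$, giving no information on the diagonal; but $\sigma(A_v(v), w) = -\sigma(v, A_v(w))$ for all $w$, and combining with $\delta A(v,w) = 0$ rewritten as $A_v(w) = A_w(v) + \sigma(v,w)\vec A$, together with the isotropy $\sigma(w, w') = 0$ for $w, w' \in T_{\widehat Z, v}$, should pin down $A_v(v)$ modulo $T_{\widehat Z, v}^{\perp\sigma} = T_{\widehat Z, v}$ (Lagrangian) and then force the coefficient. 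I expect the cleanest route is: prove (ii) first for $w$ transverse, then specialize a limiting tangent direction to recover (i), or conversely use (i) with $v$ replaced by $v + tw$ and extract the $t$-linear term.

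Concretely, for part (ii): differentiate (i). Replace $v$ by $v_t := v + tw \in \widehat Z + O(t^2)$ — more precisely pick a curve $v(t)\in \widehat Z$ with $v(0)=v$, $\dot v(0)=w$. Then (i) reads $A_{v(t)}(v(t)) = 2\sigma(\vec A, v(t))\, v(t)$. Differentiate at $t = 0$: the left side is $(\partial_t A_{v(t)})|_0 (v) + A_v(w) = A_w(v) + A_v(w)$ since $u \mapsto A_u$ is linear, so $\partial_t A_{v(t)}|_0 = A_{\dot v(0)} = A_w$. The right side differentiates to $2\sigma(\vec A, w)\, v + 2\sigma(\vec A, v)\, w$. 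Hence $A_w(v) + A_v(w) = 2\sigma(\vec A, w) v + 2\sigma(\vec A, v) w$. Now use $\delta A(v, w) = 0$, i.e. $A_v(w) - A_w(v) = \sigma(v, w)\vec A = 0$ (the last since $w \in T_{\widehat Z, v}$ and $v \in T_{\widehat Z, v}$, and $\widehat Z$ isotropic gives $\sigma(v,w)=0$). Therefore $A_v(w) = A_w(v)$, and adding to the previous relation yields $2A_v(w) = 2\sigma(\vec A, w) v + 2\sigma(\vec A, v) w$, which is exactly (ii). So in fact (ii) follows from (i) by a one-line differentiation, and the true content is establishing (i).

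For (i) itself, the argument I would write: Since $A_v \in \fg^Z$, the vector $A_v(v)$ lies in $T_{\widehat Z, v}$, a Lagrangian subspace. Write $A_v(v) = \beta v + r$ with $r \in T_{\widehat Z, v}$ (we may absorb, so really $A_v(v) \in T_{\widehat Z, v}$ and we want to show it is a multiple of $v$ with the right coefficient). To see $A_v(v) \in \C v$: pick any $u \in V$ and compute $\sigma(A_v(v), u)$. Using $A_v \in \fcsp(V)$: $\sigma(A_v(v), u) = -\sigma(v, A_v(u)) = -\sigma(v, A_u(v) + \sigma(v,u)\vec A) = -\sigma(v, A_u(v)) - \sigma(v,u)\sigma(v,\vec A)$, where I used $\delta A(v,u)=0$. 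Now $\sigma(v, A_u(v)) = -\sigma(A_u(v), v) = \sigma(v, A_u(v))$ after applying the conformal-symplectic identity for $A_u$... this needs care, but the net effect is that $\sigma(v, A_u(v))$ is determined by $\tr(A_u)$ and hence by $\sigma(\vec A, u)$, i.e. $\sigma(v, A_u(v)) = \tfrac{1}{2}\cdot\tfrac{2}{\dim V}\tr(A_u)\sigma(v,v) \cdot(\dots)$ — I'd unwind this to get $\sigma(A_v(v), u) = 2\sigma(\vec A, v)\,\sigma(v, u)$ for all $u$, which by nondegeneracy of $\sigma$ gives $A_v(v) = 2\sigma(\vec A, v)\, v$. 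The main obstacle is getting the constant $2$ and the sign right in this $\sigma$-pairing manipulation, since it requires correctly using the conformal-symplectic relation $\sigma(A_u(x), y) + \sigma(x, A_u(y)) = \tfrac{2}{\dim V}\tr(A_u)\sigma(x,y)$ simultaneously for several choices of $u$ and exploiting $\sigma(\vec A, u) = \tfrac{2}{\dim V}\tr(A_u)$; everything else is bookkeeping. I would double-check the constant on a model example (a subadjoint variety, where a nonzero prolongation exists) to make sure (i) and (ii) are normalized consistently with Definition \ref{d.prolong}.
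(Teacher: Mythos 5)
Your overall strategy coincides with the paper's: part (ii) is obtained exactly as in the paper, by differentiating (i) along an arc $v(t)=v+tw+t^2(\cdots)$ in $\widehat Z$ and using $\delta A(v,w)=0$ together with $\sigma(v,w)=0$ to get $A_v(w)=A_w(v)$; that portion of your argument is complete and correct. For part (i) you also have the right ingredients (the conformal symplectic identity, $\delta A=0$, and the fact that $A_u\in\fg^Z$ forces $A_u(v)\in T_{\widehat Z,v}$, which is isotropic and contains $v$, whence $\sigma(v,A_u(v))=0$), but the computation as you display it has a slip that matters for the constant.

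Concretely, you write $\sigma(A_v(v),u)=-\sigma(v,A_v(u))$, which treats $A_v$ as symplectic rather than conformally symplectic: the correct relation is $\sigma(A_v(v),u)+\sigma(v,A_v(u))=\tfrac{2}{\dim V}\tr(A_v)\,\sigma(v,u)=\sigma(\vec A,v)\,\sigma(v,u)$, and the dropped term $\sigma(\vec A,v)\,\sigma(v,u)$ supplies exactly half of the final coefficient $2$. With it restored, your chain reads $\sigma(A_v(v),u)=-\sigma(v,A_u(v))-\sigma(v,u)\sigma(v,\vec A)+\sigma(\vec A,v)\sigma(v,u)=2\sigma(\vec A,v)\sigma(v,u)$, using $\sigma(v,A_u(v))=0$ (which, note, follows directly from isotropy of $T_{\widehat Z,v}$ and not from any trace identity, contrary to your remark that it is ``determined by $\tr(A_u)$''). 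Nondegeneracy of $\sigma$ then gives (i), and this is precisely the paper's computation, merely chained in one line instead of combining two displayed identities. So the proposal is essentially the paper's proof once this bookkeeping is corrected; as written, the literal derivation of (i) would produce the coefficient $1$ instead of $2$.
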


\begin{proof} By the description of
$\fcsp(V)$ in Definition \ref{d.prolong}, we have $$\sigma(A_{v}(u), y) + \sigma(u,
A_{v}(y)) = \frac{2}{\dim V} \tr(A_v) \ \sigma(u,y)$$ for any $u, v, y \in V$.  Putting $v=y$
and using $\frac{2}{\dim V}\tr(A_v)   = \sigma (\vec{A}, v)$, we obtain
$$\sigma(A_{v}(u), v) - \sigma( A_{v}(v), u) = \sigma(\vec{A}, v) \ \sigma
(u,v).$$
 On
the other hand,  the condition $A_{v}(u) -A_{u}(v)= \sigma(v,u) \vec{A}$ for a contact prolongation from Definition
\ref{d.prolong} (3) implies
$$\sigma(A_{v}(u),v) - \sigma(A_{u}(v), v) = \sigma(v,u)\ \sigma(\vec{A},
v).$$ Combining the two equalities above, we obtain
$$\sigma(A_{v}(v), u) = \sigma(A_{u}(v), v) + 2\sigma(v, u) \
\sigma(\vec{A}, v).$$ Now, if  $v  \in \widehat{Z}$, then
$A_u \in \fg$ implies that $A_{u}(v) \in
T_{\widehat{Z},v}$. Since $T_{\widehat{Z},v}$ is isotropic with respect to $\sigma$,
this implies that $\sigma(A_{u}(v), v) = 0$. Thus the
above equality gives $$\sigma(A_{v}(v), u) = 2
\sigma(\sigma(\vec{A}, v) v, u)$$ for all $u \in V$. We
conclude that $$A_{v}(v) = 2 \sigma(\vec{A}, v)
v$$ for all nonzero $v \in \widehat{Z}$,
proving (i).

From (i), if we choose an arc $v(t) := v + t w +
t^2(\cdots)$ on $\widehat{Z}$ through $v$ in the direction of $w \in T_{\widehat{Z},v}$, we have
$$A_{v(t)}( v(t)) = 2 \sigma(\vec{A}, v(t)) \
v(t).$$ Expanding in $t$, the linear terms of both sides read
$$A_{v}(w) + A_{w}(v) = 2 \sigma(\vec{A}, v) w
+ 2 \sigma(\vec{A}, w) v.$$ From Definition \ref{d.prolong} (3),  $$A_{v}(w) - A_{w}(v) =
\sigma(v, w) \vec{A},$$ which is equal to zero,  because $v$ and $w$ are two
vectors in the  subspace $T_{\widehat{Z},v}$ isotropic with respect to $\sigma$. We obtain
$A_{v}(w) = A_{w}(v)$. It follows that
$$A_{v}(w) = \sigma(\vec{A}, v) w + \sigma(\vec{A},
w) v,$$ proving (ii). \end{proof}

Theorem \ref{t.1} is a direct consequence of Proposition \ref{p.Z_P} and the following.

\begin{proposition}\label{p.cpZ}
Let $Z \subset \BP V$ be a nondegenerate Legendrian submanifold. If $\fg^Z \subset \fcsp(V)$ has a  nonzero contact prolongation, then $Z$ is Euler-symmetric. \end{proposition}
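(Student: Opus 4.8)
The plan is to produce, for a general point $z \in Z$, a one-parameter subgroup $E_z \subset \mathrm{GL}(V)$ witnessing that $Z$ is Euler-symmetric, built directly from the contact prolongation $A$. The natural candidate is $E_z = \{ \exp(t \, a_z) \}$ where $a_z := A_{v} \in \mathfrak{g}^Z \subset \mathfrak{csp}(V)$ for a chosen nonzero $v \in \widehat{z}$. The computations in Proposition~\ref{p.formula} are exactly the infinitesimal statements needed: part (i), $A_v(v) = 2\sigma(\vec{A},v)\, v$, says that $v$ is an eigenvector of $a_z$, so the line $\widehat{z}$ is fixed by $E_z$ and $z$ is a fixed point in $\BP V$; part (ii), $A_v(w) = \sigma(\vec{A},v)\, w + \sigma(\vec{A},w)\, v$ for $w \in T_{\widehat{Z},v}$, says that $a_z$ acts on $T_{\widehat{Z},v}$ as $\sigma(\vec{A},v)\,\mathrm{Id}$ modulo $\widehat{z}$, which is precisely what is needed for $E_z$ to act trivially on $\BP T_{Z,z}$ and to have an isolated fixed point at $z$.

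\textbf{Key steps, in order.} First I would fix a general $z \in Z$ and a nonzero $v \in \widehat{z}$, set $a_z = A_v \in \mathfrak{g}^Z$, and normalize: after replacing $A$ by a scalar multiple we may assume $\sigma(\vec{A}, v) = 1$ (this requires checking $\sigma(\vec{A},v) \neq 0$ for general $z$ — see the obstacle below). Second, I would observe that since $a_z \in \mathfrak{g}^Z = \mathrm{Lie}(G^Z)$, the group $E_z := \{\exp(t\, a_z) : t \in \C\}$ lies in $G^Z$, hence preserves $\widehat{Z}$; if $a_z$ is semisimple this is a genuine $\C^\times$-subgroup, and in any case its image in $\mathrm{GL}(V)$ closes up to one (I would argue that the relevant spectral conditions below force $a_z$ to be diagonalizable, or pass to the semisimple part, which still lies in $\mathfrak{g}^Z$ since $\mathfrak{g}^Z$ is algebraic). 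Third, from Proposition~\ref{p.formula}(i) with the normalization, $a_z(v) = 2v$, so $\widehat{z}$ is the weight-$2$ eigenspace direction and $z \in \BP V$ is fixed. Fourth, from Proposition~\ref{p.formula}(ii), for every $w \in T_{\widehat{Z},v}$ we have $a_z(w) = w + \sigma(\vec{A},w)\, v$, so $a_z$ acts on $T_{\widehat{Z},v}/\widehat{z} = T_{\BP V, z} \supset T_{Z,z}$ as the identity; hence $E_z$ acts trivially on the projectivized tangent space $\BP T_{Z,z}$. Fifth, I would check the fixed point at $z$ is isolated: $a_z$ acting on $T_{\BP V, z} = T_{\widehat{Z},v}/\widehat{z} \oplus (V/T_{\widehat{Z},v})$ has weight $1 - 2 = -1$ on the first summand and, by the $\mathfrak{csp}$-relation and the identification $V/z^{\perp\sigma} \cong \widehat{z}^\vee$, weight $0 - 2 = -2$ on $V/z^{\perp\sigma}$ and weight... — the point is that all weights of $a_z$ on $T_{\BP V,z}$ are nonzero negative integers (I would pin this down using that $\widehat{Z}$ is Legendrian, so $V = T_{\widehat{Z},v} \oplus (V/z^{\perp\sigma})^{\text{-lift}}$ up to the interplay with $z^{\perp\sigma}$), which makes $z$ an isolated fixed point and simultaneously confirms $a_z$ is diagonalizable. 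Finally, invoking Definition~\ref{d.Euler}, $Z$ is Euler-symmetric.

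\textbf{Main obstacle.} The crux is showing $\sigma(\vec{A}, v) \neq 0$ for $v$ in the cone over a general point of $Z$ — equivalently that the contact prolongation $A$ is genuinely "centered" at general points rather than degenerating somewhere. If $\sigma(\vec{A}, v) = 0$ on a dense set then by Proposition~\ref{p.formula}(i) we would get $A_v(v) = 0$ for all $v \in \widehat{Z}$, and then (ii) gives $A_v(w) = 0$ for $w \in T_{\widehat{Z},v}$; combined with nondegeneracy of $Z$ (so $\widehat{Z}$ spans $V$ and the spaces $T_{\widehat{Z},v}$ for varying $v$ generate enough of $V$) one should be able to conclude $A = 0$, contradicting the hypothesis that the prolongation is nonzero. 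Making this vanishing argument airtight — that $A_v(v) = 0$ and $A_v|_{T_{\widehat{Z},v}} = 0$ for all $v \in \widehat{Z}$ forces $A \equiv 0$ — is the one genuinely delicate point; it presumably uses that $\{T_{\widehat{Z},v} : v \in \widehat{Z}\}$ together with $\widehat{Z}$ itself span $V$ and that $A_u(v)$ is symmetric up to the $\sigma(v,u)\vec{A}$ correction, so the full tensor $A$ is recovered from its restriction to cone directions. A secondary, more routine, obstacle is the diagonalizability/closedness of $E_z$, which I expect to dispatch via the weight computation in step five or via algebraicity of $G^Z$.
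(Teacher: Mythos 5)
Your main line is exactly the paper's: for general $z$ normalize $\sigma(\vec{A},v)=1$, use Proposition \ref{p.formula} to see that $A_v$ preserves $\widehat{z}$ with weight $2$ and acts on $T_{\widehat{Z},v}/\widehat{z}$ with weight $1$, hence by a nonzero scalar on $T_{Z,z}=\Hom(\widehat{z}, T_{\widehat{Z},v}/\widehat{z})$, and replace $A_v$ by its semisimple part (still in $\fg^Z$ by algebraicity of $G^Z$) to get the required $\C^{\times}$. Your fifth step is more than Definition \ref{d.Euler} asks for: one only needs $z$ to be an isolated fixed point of the action \emph{on $Z$} and triviality on $\BP T_{Z,z}$, and both already follow from the scalar action on $T_{Z,z}$; the weights on the rest of $T_{\BP V,z}$ are irrelevant.

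The genuine gap is the "main obstacle" you flag, and your sketch of it does not close. First, the reduction is cleaner than you state: if $\sigma(\vec{A},\cdot)$ vanishes on a dense subset of $\widehat{Z}$ it vanishes on all of $\widehat{Z}$, and since $\widehat{Z}$ spans $V$ this forces $\vec{A}=0$. So the degenerate case is precisely $\vec{A}=0$, where $A$ becomes symmetric ($A_u(v)=A_v(u)$), i.e.\ an ordinary prolongation of $\fg^Z\subset\fgl(V)$, with $A_v(v)=0$ for all $v\in\widehat{Z}$ by Proposition \ref{p.formula}(i). Second, the claim that this forces $A=0$ is not a routine polarization. Differentiating $A_{v(t)}(v(t))=0$ along arcs in $\widehat{Z}$ only yields the vanishing of the symmetric tensor $A$ on pairs $(v,w)$ with $v\in\widehat{Z}$ and $w\in T_{\widehat{Z},v}$; since $T_{\widehat{Z},v}$ is only half-dimensional (Legendrian) and $\widehat{Z}$ is typically contained in many quadrics, these pairs need not span $\Sym^2 V$, so the tensor is \emph{not} recovered from its restriction to cone directions by linear algebra alone --- one must use the constraint $A_u\in\fg^Z$ for all $u$. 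The paper closes exactly this point by invoking Lemma 2.2.1 of \cite{HM05}; you would need to cite that lemma or supply an equivalent argument to make your degenerate case airtight.
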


\begin{proof}
Pick a nonzero contact prolongation $A$ of $\fg^{Z}$. First, assume
that $\vec{A} = 0$. Then we have $A_{u}(v)= A_{v}(u)$ for all
$u,v \in V$. This means   $A$ is the usual prolongation of the Lie algebra
$\fg^Z \subset \fgl(V)$,  in the
sense of the section (1.1) of \cite{HM05} or Definition 7.9 of \cite{FH18}.
Moreover, Proposition \ref{p.formula} shows $A_{v}(v)  = 0$ for all
$v \in \widehat{Z}$. This implies $A = 0$ by Lemma 2.2.1 of
\cite{HM05}, a contradiction.

Thus we assume that $\vec{A} \neq 0$.  Pick  $v \in
\widehat{Z}$ with $\omega(\vec{A}, v) =1$. The endomorphism
$A_v \in \fg^Z$ has $v$ as an eigenvector by
Proposition \ref{p.formula} (i) and acts on the tangent space of $Z$ at the point
$z \in Z$ corresponding to $v$,
$$T_{Z, z} = \Hom (\C v, T_{\widehat{Z},v}/\C
v),$$ as the identity by Proposition \ref{p.formula} (ii). In
particular, the semi-simple part of $A_v$ under
the Jordan decomposition, to be denoted by $S_v$, is a nonzero element of $\fg^Z$ and acts on $T_{Z, z}$
as the identity. Thus the $\C^{\times}$-subgroup $\{ \exp (t S_v),
t \in \C\}$ acts on $Z$ with an isolated fixed point at $z$ and
acts trivially on $\BP T_{Z,z}$. Since this is true for any $z \in Z$ outside the hyperplane $\sigma(\vec{A},
\cdot) = 0$, we see that $Z$ is Euler-symmetric. \end{proof}

\end{document}